\numberwithin{equation}{section}
\newtheorem{Theorem}{Theorem}[section]
\newtheorem{Lemma}{Lemma}[section]
\newtheorem{Corollary}{Corollary}[section]
\theoremstyle{definition}
\theoremstyle{remark}
\newtheorem{Remark}{Remark}[section]
\newtheorem{Example}{Example}[section]
\newcommand{\essinf}{\mathop{\rm ess \, inf}\limits}
\newcommand{\esssup}{\mathop{\rm ess \, sup}\limits}
\newcommand{\mes}{\mathop{\rm mes}\nolimits}
\newcommand{\dist}{\mathop{\rm dist}\nolimits}
\author{A.A. Kon'kov}
\address{Department of Differential Equations,
Faculty of Mechanics and Mathematics,
Mo\-s\-cow Lo\-mo\-no\-sov State University,
Vorobyovy Gory,
Moscow, 119992 Russia;
Center of Nonlinear Problems of Mathematical Physics,
RUDN University,
Miklukho-Maklaya str. 6,
Moscow, 117198 Russia
}
\email{konkov@mech.math.msu.su}
\author{A.E. Shishkov}
\address{
Center of Nonlinear Problems of Mathematical Physics,
RUDN University,
Miklukho-Maklaya str. 6,
Moscow, 117198 Russia
}
\email{aeshkv@yahoo.com}
\thanks{The work is supported by RUDN University, Strategic Academic Leadership Program. The work of the first author is also supported by RSF, grant 20-11-20272.  }
\title[On removable singular sets]{On removable singular sets for solutions of higher order differential inequalities}
\keywords{Higher order differential inequalities; Nonlinearity; Removable singularities}
\subjclass{35B44, 35B08, 35J30, 35J70}
\date{}
\begin{document}

\begin{abstract}
We obtain conditions guaranteeing that weak solutions of the differential inequality 
$$
	\sum_{|\alpha| = m}
	\partial^\alpha
	a_\alpha (x, u)
	\ge
	f (x)
	g (|u|)
	\quad
	\mbox{in } \Omega \setminus S,
$$
has a removable singular set $S \subset \Omega$, where $\Omega$ is a bounded domain and $a_\alpha$, $f$, and $g$ are some functions.
\end{abstract}

\maketitle

\section{Introduction}

We study solutions of the inequality
\begin{equation}
	\sum_{|\alpha| = m}
	\partial^\alpha
	a_\alpha (x, u)
	\ge
	f (x)
	g (|u|)
	\quad
	\mbox{in } \Omega \setminus S,
	\label{1.1}
\end{equation}
where $\Omega$ is a bounded domain in ${\mathbb R}^n$, $m, n \ge 1$ are integers, $S$ is a 
compact subset of $\Omega$, and $a_\alpha$ are Caratheodory functions such that
$$
	|a_\alpha (x, \zeta)| 
	\le 
	A |\zeta|,
	\quad
	|\alpha| = m,
$$
with some constant $A > 0$ for almost all $x \in \Omega$ 
and for all $\zeta \in {\mathbb R}$.
Also let $f : \Omega \to [0, \infty)$ be a measurable function 
and $g : [0, \infty) \to [0, \infty)$ be a twice continuously differentiable function such that
$g (0) = 0$ and, in addition, $g (\zeta) > 0$, $g' (\zeta) > 0$, and $g'' (\zeta) > 0$ for all $\zeta \in (0, \infty)$.
As is customary, by $\alpha = {(\alpha_1, \ldots, \alpha_n)}$ we mean a multi-index with
$|\alpha| = \alpha_1 + \ldots + \alpha_n$ 
and
$
	\partial^\alpha 
	= 
	{\partial^{|\alpha|} / (\partial_{x_1}^{\alpha_1} \ldots \partial_{x_n}^{\alpha_n})},
$
where $x = {(x_1, \ldots, x_n)}$.

Let us denote by $B_r^x$ an open ball of radius $r > 0$ centered at $x \in {\mathbb R}^n$.
In the case of $x = 0$, we write $B_r$ instead of $B_r^0$.
For an arbitrary set $\omega \subset {\mathbb R}^n$ and a real number $r > 0$ by $\omega_r$ we mean the set of the points $x \in {\mathbb R}^n$ 
the distance from which to $\omega$ is less than $r$.

The function $u$ is called a weak solution of~\eqref{1.1} if 
$u \in L_1 (\Omega \setminus S_r)$ 
and
$f (x) {g (|u|)} \in L_1 (\Omega \setminus S_r)$ for any $r > 0$
and, moreover,
\begin{equation}
	\int_\Omega
	\sum_{|\alpha| = m}
	(-1)^m
	a_\alpha (x, u)
	\partial^\alpha
	\varphi
	\,
	dx
	\ge
	\int_\Omega
	f (x)
	g (|u|)
	\varphi
	\,
	dx
	\label{1.2}
\end{equation}
for any non-negative function $\varphi \in C_0^\infty (\Omega \setminus S)$.
In so doing, we say that the singular set $S$ is removable if
$u \in L_1 (\Omega)$,
$f (x) {g (|u|)} \in L_1 (\Omega)$, 
and inequality~\eqref{1.2} is valid for all non-negative functions $\varphi \in C_0^\infty (\Omega)$.

The problem of removability of singularities for solutions of differential equations and inequalities is a traditional area of interest for many mathematicians~[1--11]. 
Most of the papers in this area are devoted to the case of second-order differential operators~[1--7].
For higher order differential inequalities~\eqref{1.1}, sufficient conditions for the removability of a singularity are known only in the case of the power nonlinearity $g (t) = t^\lambda$~\cite{BP, Kondratiev} or in the case of an isolated singularity~\cite{KSANS}.

Singular sets for solutions of linear higher order elliptic equations were studied in~\cite{Marcus}. 
For these equations, in order to remove a singularity, some growth conditions have to be imposed on the solutions when approaching a singular set. 

In the case of an arbitrary nonlinearity, conditions for the removability of a singular set remained unknown.
The paper presented to your attention provides an answer to this question.
Note that we do not impose any requirements on the growth of solutions near a singularity.
We also do not impose any ellipticity conditions on the coefficients $a_\alpha$ of the differential operator.
Thus, our results can be applied to a wide class of differential inequalities. 

Consider the Legendre transformation
$$
	g^* (\xi)
	=
	\left\{
		\begin{aligned}
			&
			\int_{
				g' (0)
			}^{
				\xi
			}
			(g')^{-1} (\zeta)
			\,
			d \zeta,
			&
			&
			\xi > g' (0),
			\\
			&
			0,
			&
			&
			\xi \le g' (0)
		\end{aligned}
	\right.
$$
of the function $g$, 
where $(g')^{-1}$ is the inverse function to $g'$.
From the Fenchel-Young inequality, it obviously follows that
$$
	a b \le g (a) + g^* (b)
$$
for all real numbers $a \ge 0$ and $b \ge 0$.
In the case of $g (t) = t^\lambda / \lambda$, $\lambda > 1$, this inequality takes the form
$$
	a b
	\le 
	\frac{
		1
	}{
		\lambda
	} 
		a^\lambda
	+ 
	\frac{
		\lambda - 1
	}{
		\lambda
	}
		b^{\lambda / (\lambda - 1)}
$$
for all real numbers $a \ge 0$ and $b \ge 0$.

We put
$$
	\gamma (\xi)
	=
	\frac{g^* (\xi)}{\xi}
	\quad
	\mbox{and}
	\quad
	\mu (\xi)
	=
	\inf_{t \ge \xi}
	\frac{g (t)}{t}.
$$

Let $N_r (\omega)$ be the minimal number of sets of diameter at most $r$ which can cover a bounded set $\omega$. The lower and upper fractal dimensions of $\omega$ respectively are defined by
$$
	\operatorname{\underline{dim}}_F \omega
	=
	\liminf_{r \to +0}
	\frac{
		\log N_r (\omega)
	}{
		\log 1 / r
	}
$$
and
$$
	\operatorname{\overline{dim}}_F \omega
	=
	\limsup_{r \to +0}
	\frac{
		\log N_r (\omega)
	}{
		\log 1 / r
	}.
$$
It does not present any particular problem to verify that
$$
	\operatorname{\underline{dim}}_F \omega
	=
	n
	-
	\limsup_{r \to +0}
	\frac{
		\log \mes \omega_r
	}{
		\log r
	}
$$
and
$$
	\operatorname{\overline{dim}}_F \omega
	=
	n
	-
	\liminf_{r \to +0}
	\frac{
		\log \mes \omega_r
	}{
		\log r
	}.
$$
Therefore, the last two expressions can be used as  another definition of the lower and upper fractal dimensions.

If $\operatorname{\underline{dim}}_F \omega = \operatorname{\overline{dim}}_F \omega$,
then we say that $\omega$ has the fractal dimension
$$
	\operatorname{dim}_F \omega
	=
	\lim_{r \to +0}
	\frac{
		\log N_r (\omega)
	}{
		\log 1 / r
	}.
$$
In the literature, the fractal dimensions are sometimes called Minkowski dimensions or box-counting dimensions~\cite{Falconer, Bishop}.
The upper fractal dimension is also known as Kolmogorov's entropy or Kolmogorov's capacity~\cite{Falconer}.

We say~\cite{Marcus} that $\omega$ has finite $k$-dimension if
$$
	\limsup_{r \to +0}
	\frac{
		\mes \omega_r
	}{
		r^{n - k}
	}
	<
	\infty.
$$
In so doing, $\omega$ is a $k$-dimensional set if 
$\mes \omega_r / r^{n - k}$ tends to a positive finite limit as $r \to +0$~\cite{Falconer}.
If $\omega$ is a $k$-dimensional set, then it obviously has both the fractal dimension $k$ and finite $k$-dimension.

\section{Main results}

\begin{Theorem}\label{T2.1}
Suppose that
\begin{equation}
	\int_1^\infty
	g^{- 1 / m} (\zeta)
	\zeta^{1 / m - 1}
	\,
	d\zeta
	<
	\infty
	\label{T2.1.1}
\end{equation}
and
\begin{equation}
	\int_{\Omega \setminus S}
	\gamma
	\left(
		\frac{1}{f (x)}
	\right)
	dx
	<
	\infty.
	\label{T2.1.2}
\end{equation}
Also let there exist a real number $\delta > 0$ such that
\begin{equation}
	\lim_{r \to +0}
	r^{
		n
		-
		k
		- 
		m 
		- 
		\delta
	}
	G^{-1}
	\left(
		C
		r
		\essinf_{S_r \setminus S}
		f^{1 / m}
	\right)
	=
	0
	\label{T2.1.3}
\end{equation}
for all real numbers $C > 0$, where 
\begin{equation}
	k = \operatorname{\overline{dim}}_F \partial S
	\label{T3.4.4}
\end{equation}
and $G^{-1}$ is the inverse function to
\begin{equation}
	G (t)
	=
	\int_t^\infty
	g^{- 1 / m} (\zeta)
	\zeta^{1 / m - 1}
	\,
	d\zeta.
	\label{T2.1.5}
\end{equation}
Then the singular set $S$ is removable for any weak solution of~\eqref{1.1}.
\end{Theorem}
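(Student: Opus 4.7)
The plan is a cutoff/test-function argument in which the delicate part is matching the geometric information about $\partial S$ with the Fenchel-Young type estimate dictated by $g$.

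First I would construct, for each small $r > 0$, a cutoff $\eta_r \in C_0^\infty(\Omega \setminus S)$ which equals $1$ outside $S_{2r}$, vanishes in a neighborhood of $S_r$, and satisfies $|\partial^\beta \eta_r| \le C r^{-|\beta|}$ for $|\beta| \le m$. Such a cutoff is produced by patching bump functions subordinate to a minimal covering of $\partial S$ by balls of radius comparable to $r$; the definition of $\operatorname{\overline{dim}}_F \partial S = k$ then gives the measure bound $\mes (S_{2r} \setminus S_r) \le C_\varepsilon r^{n - k - \varepsilon}$ for every $\varepsilon > 0$.

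Fixing a nonnegative $\varphi \in C_0^\infty(\Omega)$, I would insert $\varphi \eta_r \in C_0^\infty(\Omega \setminus S)$ into~\eqref{1.2} and expand $\partial^\alpha(\varphi \eta_r)$ via the Leibniz rule. The leading term $\eta_r \partial^\alpha \varphi$ is destined to yield $\int a_\alpha \partial^\alpha \varphi\,dx$ in the limit; all remaining terms are supported in $S_{2r} \setminus S_r$ and, using $|a_\alpha(x,u)| \le A|u|$, contribute an error bounded by
$$
C \|\varphi\|_{C^m} r^{-m} \int_{S_{2r} \setminus S_r} |u|\,dx.
$$
The heart of the proof is to drive this error to zero while at the same time extracting $u \in L_1(\Omega)$ and $f(x) g(|u|) \in L_1(\Omega)$. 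For this I would use the elementary pointwise splitting $|u| \le \xi(x) + g(|u|)/\mu(\xi(x))$, which is immediate from the definition of $\mu$, choosing $\xi(x)$ so that the weight $C r^{-m}/\mu(\xi(x))$ does not exceed, say, $\frac{1}{2} f(x)$. The $g(|u|)$ contribution is then absorbed into the right-hand side $\int f g(|u|) \varphi \eta_r\,dx$ of~\eqref{1.2}, while the residual is $C \int_{S_{2r} \setminus S_r} r^{-m} \xi(x)\,dx$. A computation with the Legendre transform rewrites the admissible $\xi$ as $\xi(x) = r^{-1} G^{-1}(C r f(x)^{1/m})$, with $G$ as in~\eqref{T2.1.5}; combined with the fractal measure estimate above, this yields an error of order $r^{n - k - m - \varepsilon} G^{-1}(C r \essinf_{S_r \setminus S} f^{1/m})$, exactly the quantity assumed to vanish in~\eqref{T2.1.3}. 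The preliminary $L_1$-bound on $f g(|u|)$ needed before passing to the limit is obtained in the same way, but with $\varphi \eta_r$ replaced by a truncation of the constant function; this is precisely where hypothesis~\eqref{T2.1.2} and the identity $f g^*(1/f) = \gamma(1/f)$ enter.

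The main obstacle is the simultaneous balancing of three different scales: the derivative loss $r^{-m}$ coming from the cutoff, the measure bound $r^{n - k - \varepsilon}$ coming from the fractal dimension of $\partial S$, and the algebraic rearrangement through $g$ and its Legendre transform that turns the crude bound $|a_\alpha| \le A|u|$ into something controllable by the data $(f,g)$. The exponent $1/m$ appearing inside $G$ strongly suggests that the correct bookkeeping is an iterative use of the splitting $|u| \le \xi + g(|u|)/\mu(\xi)$ at $m$ different levels, one for each derivative borne by the cutoff, so that each application pays the price of a factor $g^{-1/m}$; keeping track of this iteration, and checking that~\eqref{T2.1.1} is exactly what guarantees $G^{-1}$ to be well defined and to tend to zero at infinity so that~\eqref{T2.1.3} is not vacuous, is the principal technical difficulty.
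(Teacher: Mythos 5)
Your overall frame (cutoffs built from a covering of $\partial S$, insertion of $\varphi\eta_r$ into \eqref{1.2} with a Leibniz expansion, and the use of \eqref{T2.1.2} via $f g^*(1/f)=\gamma(1/f)$ to pass from $fg(|u|)\in L_1$ to $u\in L_1$) does match the paper's Lemmas~3.1--3.3. But the central quantitative step is not carried out, and the mechanism you sketch for it would not produce the right bound. A single application of the splitting $|u|\le\xi+g(|u|)/\mu(\xi)$ with the absorption requirement $Cr^{-m}/\mu(\xi)\le\frac12 f$ forces $\xi=h^{-1}(Cr^{-m}/f)$ where $h(t)=g(t)/t$ (note $\mu(\xi)=g(\xi)/\xi$ by convexity); this coincides with your claimed $r^{-1}G^{-1}(Crf^{1/m})$ only for power-type $g$, where $\int_\Theta^\infty h^{-1/m}(\zeta)\,d\zeta/\zeta$ is comparable to its first dyadic block $h^{-1/m}(\Theta)$. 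For the slowly varying nonlinearities that are the point of the paper, e.g.\ $g(t)=t\log^\nu(e+t)$ in Example~2.1, the two quantities are essentially different, and the single-shot bound $r^{n-k-m-\varepsilon}\,h^{-1}(Cr^{-m}/\essinf f)$ does not tend to zero under \eqref{T2.1.3}. In the paper, $G^{-1}$ is \emph{not} produced by Fenchel--Young at all: it comes from an iteration (Lemma~3.4) over a sequence of radii on which the averaged mass $J(\rho)$ doubles, where the weak formulation plus Jensen's inequality give $J(\rho_{i+1})-J(\rho_i)\ge C(\rho_i-\rho_{i+1})^m g(\beta J(\rho_i))\essinf f$, and summing the $m$-th root of this telescopes into $\int_{J(\rho_1)}^\infty g^{-1/m}(\beta\zeta/2)\zeta^{1/m-1}d\zeta\ge C(\rho_1-\rho_0)\essinf f^{1/m}$. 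The number of iteration levels is dictated by the doubling of $J$, not by the $m$ derivatives, so your proposed ``$m$ levels, one per derivative'' bookkeeping is not the right picture; the $1/m$ exponent arises from taking an $m$-th root of the difference inequality.

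Two further gaps. First, your absorption of the $g(|u|)$ part of the error into $\int fg(|u|)\varphi\eta_r$ fails on the transition annulus $S_{2r}\setminus S_r$, where $\eta_r<1$ (indeed $\eta_r\approx 0$ near the inner edge); and obtaining the a priori bound $\int_{S_\varepsilon\setminus S}fg(|u|)<\infty$ ``in the same way'' is circular, since the same unabsorbed error term appears there. The paper avoids this by arguing by contradiction: if $S$ is not removable, Lemma~3.3 yields the uniform lower bound \eqref{3.1} on $(r-\rho)^{-m}\int_{S_r\setminus S_\rho}|u|$, which is then contradicted by the upper bound of Lemma~3.4 combined with \eqref{T2.1.3}. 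Second, the reduction from $\operatorname{\overline{dim}}_F\partial S=k$ to a usable measure bound requires both the $\delta$-loss and a careful choice of a dyadic subsequence along which consecutive annuli have comparable measure (condition \eqref{PT2.3.1}, needed so that the Jensen step applies with $\beta_j\in[\frac12,1]$); your blanket estimate $\mes(S_{2r}\setminus S_r)\le C_\varepsilon r^{n-k-\varepsilon}$ for all small $r$ is what the definition of the upper box dimension does provide, but the comparability of consecutive annuli does not hold for all $r$ and must be extracted along a subsequence, as in the proof of Theorem~\ref{T2.3}.
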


\begin{Theorem}\label{T2.2}
Suppose that conditions~\eqref{T2.1.1} and~\eqref{T2.1.2} are valid 
and, moreover, $\partial S$ has finite $k$-dimension.
If
\begin{equation}
	\limsup_{r \to +0}
	r^{n - k - m}
	G^{-1}
	\left(
		C
		r
		\essinf_{S_r \setminus S}
		f^{1 / m}
	\right)
	<
	\infty
	\label{T2.2.1}
\end{equation}
and
\begin{equation}
	\liminf_{r \to +0}
	r^m
	\mu (C r^{m - n +k})
	\essinf_{S_r \setminus S}
	f
	>
	0
	\label{T2.2.2}
\end{equation}
for all real numbers $C > 0$,
where $G^{-1}$ is the inverse function to~\eqref{T2.1.5},
then the singular set $S$ is removable for any weak solution of~\eqref{1.1}.
\end{Theorem}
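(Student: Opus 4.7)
The plan is to mirror the proof of Theorem~\ref{T2.1} via a cutoff-and-Fenchel-Young argument, but to exploit the additional geometric information (finite $k$-dimension of $\partial S$) together with the lower bound supplied by condition~\eqref{T2.2.2} to handle the borderline situation $\delta = 0$ that is not covered by~\eqref{T2.1.3}. The fact that $\partial S$ has finite $k$-dimension furnishes a uniform volume bound $\mes (\partial S)_r \le C r^{n - k}$, which replaces the ``$\delta$-margin'' used in Theorem~\ref{T2.1}.

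I would first construct, for each small $r > 0$, a cutoff $\eta_r \in C_0^\infty (\Omega \setminus S)$ that vanishes near $S$, equals one outside $S_{2r}$, and satisfies $|\partial^\alpha \eta_r| \le C r^{- |\alpha|}$ for $|\alpha| \le m$. Plugging $\varphi = \eta_r^{2m} \psi$ into~\eqref{1.2} for a non-negative $\psi \in C_0^\infty (\Omega)$, using Leibniz' rule and the structural bound $|a_\alpha (x, u)| \le A |u|$, I obtain an inequality of the form
$$
    \int_\Omega f (x) g (|u|) \eta_r^{2m} \psi \, dx
    \le
    C
    \int_{S_{2r} \setminus S}
        |u| \, r^{-m}
    \, dx
    +
    R_\psi (r),
$$
where $R_\psi (r)$ collects the terms containing at least one derivative of $\psi$ and is harmless since $\psi$ is smooth. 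To the first integral on the right I would apply the Fenchel-Young inequality $ab \le g(a) + g^*(b)$ after an appropriate rescaling so that the $g$-piece is absorbed into the left-hand side; the $g^*$-piece produces a remainder of the form $C r^{-m} \int_{S_{2r} \setminus S} \gamma ( C r^{-m} / f ) \, dx$. Combining the resulting estimate with the monotonicity of $\gamma$, \eqref{T2.1.2}, \eqref{T2.2.1}, and the volume bound on $S_{2r}$ (via the inverse formula for $G$ relating the $g^{-1/m}$-scale to $|u|$ exactly as in Theorem~\ref{T2.1}), this remainder is shown to be uniformly bounded as $r \to 0$.

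The main obstacle is that, in contrast to Theorem~\ref{T2.1}, the bound on $\int f g (|u|) \eta_r^{2m} \, dx$ no longer tends to zero; to conclude $u \in L_1 (\Omega)$ and $f g (|u|) \in L_1 (\Omega)$, one must convert this into control of $|u|$ itself over $S_r \setminus S$. This is exactly the role played by~\eqref{T2.2.2}. Since $\mu$ is nondecreasing and $g(t) \ge t \mu (t)$, the hypothesis $r^m \mu ( C r^{m - n + k} ) \essinf_{S_r \setminus S} f \ge c > 0$ implies that whenever $|u (x)| \ge C r^{m - n + k}$ on $S_r \setminus S$ one has $f (x) g (|u|) \ge c r^{-m} |u|$. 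Splitting $S_r \setminus S$ according to this threshold and using $\mes (\partial S)_r \le C r^{n - k}$, I get a uniform bound of the form $\int_{S_r \setminus S} |u| \, dx \le C r^m \int_\Omega f g (|u|) \eta_r^{2m} \, dx + C r^m$, which together with the previous step yields $u \in L_1 (\Omega)$ and $f g (|u|) \in L_1 (\Omega)$.

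With these integrability properties in hand, the last step is to extend~\eqref{1.2} to arbitrary non-negative $\psi \in C_0^\infty (\Omega)$. I would pass to the limit $r \to 0$ in the inequality with test function $\eta_r^{2m} \psi$, using dominated convergence on both sides: on the left-hand side the pointwise limit of $a_\alpha (x, u) \partial^\alpha (\eta_r^{2m} \psi)$ equals $a_\alpha (x, u) \partial^\alpha \psi$ almost everywhere on $\Omega \setminus S$, and the cross terms containing derivatives of $\eta_r$ vanish in $L_1$ thanks to the just-established $L_1$-estimate on $|u|$ in $S_r \setminus S$ combined with the bound $|\partial^\alpha \eta_r^{2m}| \le C r^{-|\alpha|}$ and the $k$-dimensional volume control. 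This gives~\eqref{1.2} for all admissible $\psi$, proving that $S$ is removable.
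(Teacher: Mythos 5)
Your proposal takes a fundamentally different route from the paper, and it has genuine gaps. The paper proves Theorem~\ref{T2.2} by reducing it to Theorem~\ref{T2.4} via the inclusion $S_r \setminus S \subset (\partial S)_r$, and Theorem~\ref{T2.4} is proved \emph{by contradiction}: assuming $S$ is not removable, Lemma~\ref{L3.3} gives the uniform lower bound~\eqref{3.1}, and the heart of the argument is the iteration in Lemma~\ref{L3.4}, which constructs a dyadic doubling sequence $J(\rho_{i+1}) = 2J(\rho_i)$ and telescopes the Keller--Osserman integral $G$ to turn the lower bound~\eqref{3.1} and the hypothesis~\eqref{T2.2.1} into a concrete upper bound; condition~\eqref{T2.2.2} then kills the possibility that $\int_{S_{r_i} \setminus S_{r_{i+1}}} f\, g(|u|) \to 0$. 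Your direct approach has no analogue of this iteration, and the crucial hypothesis~\eqref{T2.2.1} (the one involving $G^{-1}$) is only waved at in a parenthetical.

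Concretely, two steps break down. First, the Fenchel--Young absorption: after writing $r^{-m}|u| \le f\,g(|u|) + r^{-m}\gamma(r^{-m}/f)$ on $S_{2r} \setminus S$, the term $\int_{S_{2r}\setminus S} f\,g(|u|)\,dx$ is supposed to be absorbed into $\int_\Omega f\,g(|u|)\,\eta_r^{2m}\psi\,dx$ -- but $\eta_r^{2m}$ vanishes on precisely the region near $S$ where the absorption is needed, so nothing cancels there. Moreover the claim that the remainder $r^{-m}\int_{S_{2r}\setminus S}\gamma(Cr^{-m}/f)\,dx$ stays bounded as $r\to 0$ is asserted but not justified: $\gamma$ is increasing and unbounded, so this quantity involves a delicate competition between $r^{-m}\gamma(Cr^{-m}/f)$ and $\mes S_{2r}\setminus S$, and controlling it is exactly what the iterated Keller--Osserman machinery of Lemma~\ref{L3.4} is for. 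Second, your final passage to the limit $r\to 0$ in~\eqref{1.2} requires the cross terms $r^{-m}\int_{S_{2r}\setminus S}|u|\,dx$ to tend to zero, but the $L_1$-estimate you derive, $\int_{S_r\setminus S}|u|\,dx \le Cr^m\int f\,g(|u|)\,\eta_r^{2m}\,dx + Cr^m$, yields only boundedness of $r^{-m}\int_{S_r\setminus S}|u|\,dx$, not decay. This is precisely the obstruction that forces the paper into a proof by contradiction rather than a direct argument: under the hypotheses of Theorem~\ref{T2.2} one cannot in general show the cutoff cross terms vanish, so one must instead rule out the alternative~\eqref{3.1} supplied by Lemma~\ref{L3.3}.
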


\begin{Remark}\label{R2.1}
Condition~\eqref{T2.1} generalizes the well-known Keller -- Osserman condition~\cite{Keller, Osserman}.
It also arises in the study of the blow-up phenomenon for entire solutions of higher order differential inequalities~\cite{KS2019}.
\end{Remark}

Theorems~\ref{T2.1} and~\ref{T2.2} are proved in Section~\ref{proof}.
Now we dwell on some results that follow from these theorems.
A special case of~\eqref{1.1} is inequalities of the form
\begin{equation}
	\sum_{|\alpha| = m}
	\partial^\alpha
	a_\alpha (x, u)
	\ge
	f (x)
	|u|^\lambda
	\quad
	\mbox{in } \Omega \setminus S,
	\label{2.1}
\end{equation}
where $\lambda$ is a real number.

\begin{Corollary}\label{C2.1}
Suppose that $\lambda > 1$,
\begin{equation}
	\int_{\Omega \setminus S}
	f^{- 1 / (\lambda - 1)} (x)
	\,
	dx
	<
	\infty,
	\label{C2.1.1}
\end{equation}
and
$$
	\lim_{r \to +0}
	r^{
		n - k - \lambda m / (\lambda  - 1) - \delta 
	}
	\esssup_{S_r \setminus S}
	f^{ - 1 / (\lambda - 1)}
	=
	0
$$
for some real number $\delta > 0$,
where $k$ is given by~\eqref{T3.4.4}.
Then the singular set $S$ is removable for any weak solution of~\eqref{2.1}.
\end{Corollary}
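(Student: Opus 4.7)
The plan is to deduce Corollary~\ref{C2.1} directly from Theorem~\ref{T2.1} by specializing to the power nonlinearity $g(t)=t^\lambda$, and to verify each of the hypotheses~\eqref{T2.1.1}, \eqref{T2.1.2}, \eqref{T2.1.3} by a short, explicit computation. The inequality~\eqref{2.1} is exactly~\eqref{1.1} with this choice of $g$, so nothing is lost by invoking the general theorem.

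First I would check~\eqref{T2.1.1}. With $g(\zeta)=\zeta^\lambda$ the integrand becomes $\zeta^{-\lambda/m}\zeta^{1/m-1}=\zeta^{(1-\lambda)/m-1}$, and convergence of $\int_1^\infty\zeta^{(1-\lambda)/m-1}\,d\zeta$ is equivalent to $\lambda>1$, which is assumed. Next I would compute the Legendre transform. Since $g'(\zeta)=\lambda\zeta^{\lambda-1}$ and $g'(0)=0$, one gets $(g')^{-1}(\zeta)=(\zeta/\lambda)^{1/(\lambda-1)}$, and a direct integration yields
$$
	g^*(\xi)=\frac{\lambda-1}{\lambda}\,\lambda^{-1/(\lambda-1)}\,\xi^{\lambda/(\lambda-1)},\qquad
	\gamma(\xi)=\frac{g^*(\xi)}{\xi}=c_1\,\xi^{1/(\lambda-1)}
$$
for a positive constant $c_1$. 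Hence $\gamma(1/f(x))=c_1 f(x)^{-1/(\lambda-1)}$, so condition~\eqref{T2.1.2} is precisely~\eqref{C2.1.1}.

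The main point is to translate~\eqref{T2.1.3}. Using $g(\zeta)=\zeta^\lambda$ one computes
$$
	G(t)=\int_t^\infty\zeta^{(1-\lambda)/m-1}\,d\zeta=\frac{m}{\lambda-1}\,t^{-(\lambda-1)/m},
$$
and inverting gives $G^{-1}(s)=c_2\,s^{-m/(\lambda-1)}$ for some $c_2>0$. Since $\essinf_{S_r\setminus S}f^{1/m}=(\essinf_{S_r\setminus S}f)^{1/m}$ and
$$
	(\essinf_{S_r\setminus S}f)^{-1/(\lambda-1)}=\esssup_{S_r\setminus S}f^{-1/(\lambda-1)},
$$
we obtain
$$
	G^{-1}\!\left(C r\,\essinf_{S_r\setminus S}f^{1/m}\right)
	=c_3 C^{-m/(\lambda-1)}\,r^{-m/(\lambda-1)}\,\esssup_{S_r\setminus S}f^{-1/(\lambda-1)}.
$$
Multiplying by $r^{n-k-m-\delta}$ and collecting exponents,
$$
	n-k-m-\delta-\frac{m}{\lambda-1}=n-k-\frac{\lambda m}{\lambda-1}-\delta,
$$
so~\eqref{T2.1.3} becomes exactly the limit condition assumed in the corollary (the factor $c_3 C^{-m/(\lambda-1)}$ is harmless because~\eqref{T2.1.3} is required for all $C>0$).

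All three hypotheses of Theorem~\ref{T2.1} are thus verified under the assumptions of Corollary~\ref{C2.1}, and Theorem~\ref{T2.1} gives the removability of $S$. No obstacle arises: the entire proof is a transcription of the Legendre-transform/inverse-function formulas for $g(t)=t^\lambda$ into the statement of the general theorem.
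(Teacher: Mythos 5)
Your proof is correct and takes the same route the paper intends: the authors simply remark that Corollary~\ref{C2.1} ``follows directly from Theorem~\ref{T2.1}," and your argument is exactly that direct specialization to $g(t)=t^\lambda$, with the Legendre transform, the function $\gamma$, and the inverse $G^{-1}$ computed explicitly and the exponent arithmetic $n-k-m-\delta-m/(\lambda-1)=n-k-\lambda m/(\lambda-1)-\delta$ carried out correctly. All three hypotheses of Theorem~\ref{T2.1} are verified as claimed, so nothing further is needed.
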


\begin{Corollary}\label{C2.2}
Suppose that $\lambda > 1$, condition~\eqref{C2.1.1} is valid and, moreover, $\partial S$ has finite $k$-dimension.
If
$$
	\limsup_{r \to +0}
	r^{
		n - k - \lambda m / (\lambda  - 1)
	}
	\esssup_{S_r \setminus S}
	f^{ - 1 / (\lambda - 1)}
	<
	\infty
$$
and
$$
	\liminf_{r \to +0}
	r^{
		n - k + \lambda (m - n + k)
	}
	\essinf_{S_r \setminus S}
	f
	>
	0,
$$
then the singular set $S$ is removable for any weak solution of~\eqref{2.1}.
\end{Corollary}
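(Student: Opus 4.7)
The strategy is simply to reduce Corollary~\ref{C2.2} to Theorem~\ref{T2.2} by specializing to $g(t)=t^\lambda$. The whole task consists in computing the three derived quantities $G$, $\gamma$, $\mu$ explicitly for this $g$ and then checking, by a direct exponent count, that hypotheses \eqref{T2.1.1}, \eqref{T2.1.2}, \eqref{T2.2.1}, \eqref{T2.2.2} are converted into the hypotheses stated in the corollary (condition \eqref{C2.1.1} plus the two displayed inequalities).

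\textbf{Step 1: compute $G$ and $G^{-1}$.} From \eqref{T2.1.5} with $g(\zeta)=\zeta^\lambda$ one has
$$
G(t)=\int_t^\infty \zeta^{-\lambda/m}\zeta^{1/m-1}\,d\zeta=\int_t^\infty\zeta^{-(\lambda-1)/m-1}\,d\zeta=\frac{m}{\lambda-1}\,t^{-(\lambda-1)/m},
$$
which is finite precisely because $\lambda>1$; this takes care of \eqref{T2.1.1}. Inverting gives $G^{-1}(s)=c_1 s^{-m/(\lambda-1)}$ for a constant $c_1(m,\lambda)>0$.

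\textbf{Step 2: compute $\gamma$ and $\mu$.} Since $g'(t)=\lambda t^{\lambda-1}$, $(g')^{-1}(\zeta)=(\zeta/\lambda)^{1/(\lambda-1)}$, and $g'(0)=0$, one gets $g^*(\xi)=c_0 \xi^{\lambda/(\lambda-1)}$, hence $\gamma(\xi)=g^*(\xi)/\xi=c_0\xi^{1/(\lambda-1)}$. Therefore
$$
\int_{\Omega\setminus S}\gamma\!\left(\tfrac{1}{f(x)}\right)dx = c_0\int_{\Omega\setminus S} f^{-1/(\lambda-1)}\,dx,
$$
so hypothesis \eqref{T2.1.2} is equivalent to \eqref{C2.1.1}. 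As for $\mu$, the function $t\mapsto g(t)/t=t^{\lambda-1}$ is monotone increasing on $(0,\infty)$ because $\lambda>1$, so the infimum defining $\mu$ is attained at the left endpoint, yielding $\mu(\xi)=\xi^{\lambda-1}$.

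\textbf{Step 3: translate \eqref{T2.2.1} and \eqref{T2.2.2}.} Using the formulas of Steps 1 and 2 together with the elementary identity $(\essinf f^{1/m})^{-m/(\lambda-1)}=\esssup f^{-1/(\lambda-1)}$ (valid because $s\mapsto s^{-m/(\lambda-1)}$ is decreasing), one computes
$$
r^{n-k-m}G^{-1}\!\left(Cr\,\essinf_{S_r\setminus S}f^{1/m}\right) = c_1 C^{-m/(\lambda-1)}\, r^{\,n-k-\lambda m/(\lambda-1)}\esssup_{S_r\setminus S}f^{-1/(\lambda-1)},
$$
which is exactly the expression appearing in the first displayed inequality of Corollary~\ref{C2.2}, and
$$
r^m\mu\!\left(Cr^{m-n+k}\right)\essinf_{S_r\setminus S}f = C^{\lambda-1}\,r^{\,n-k+\lambda(m-n+k)}\essinf_{S_r\setminus S}f,
$$
which is the second displayed inequality. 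Therefore all hypotheses of Theorem~\ref{T2.2} hold, and the conclusion follows.

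\textbf{Expected obstacle.} There is no genuine difficulty; the proof is pure bookkeeping of exponents. The only mild point to watch is that $g(t)=t^\lambda$ may fail to be $C^2$ up to $t=0$ when $1<\lambda<2$, but this is routine since the hypotheses of Theorem~\ref{T2.2} are really used only through the behaviour of $g$ on $(0,\infty)$ (where $g',g''>0$ for every $\lambda>1$), or can be bypassed by a trivial approximation.
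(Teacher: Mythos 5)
Your proof is correct and follows precisely the route the paper itself indicates ("Proof of Corollaries~\ref{C2.1} and~\ref{C2.2} follows directly from Theorems~\ref{T2.1} and~\ref{T2.2}"): specialize $g(t)=t^\lambda$, compute $G^{-1}$, $\gamma$, $\mu$, and verify that the hypotheses match. The exponent bookkeeping checks out and the side remark about $C^2$ regularity at the origin for $1<\lambda<2$ is correctly dismissed as immaterial.
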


Proof of Corollaries~\ref{C2.1} and~\ref{C2.2} follows directly from Theorems~\ref{T2.1} and~\ref{T2.2}.

\begin{Example}\label{E2.1}
In~\eqref{2.1}, let
\begin{equation}
	f (x)
	=
	\dist^\sigma (x, S),
	\label{E2.1.1}
\end{equation}
where $\sigma$ is a real number.
In so doing, let $\Omega$ be a bounded plane domain and $S$ be the von Koch snowflake that can be constructed by the following procedure. At first step, we divide the sides of an equilateral triangle into three equal parts. Then, on each side, we replace the middle segment with two segments equal to it so that the protrusion faces outside the triangle.
Further, this procedure is repeated with each of the segments obtained at the previous step (see figure~\ref{fig1}).

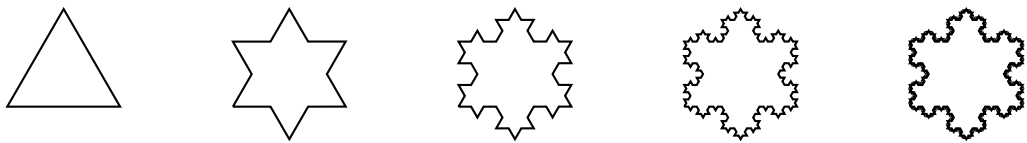
\begin{figure}[h]
\begin{tikzpicture}
[decoration=Koch snowflake,draw=black,fill=white!20,thick]
\filldraw (0,-1) -- ++(60:1.5) -- ++(-60:1.5) -- cycle ;
\filldraw decorate{(3,-1) -- ++(60:1.5) -- ++(-60:1.5) -- cycle };
\filldraw decorate{decorate{(6,-1) -- ++(60:1.5) -- ++(-60:1.5) -- cycle }};
\filldraw decorate{decorate{decorate{(9,-1) -- ++(60:1.5) -- ++(-60:1.5) -- cycle }}};
\filldraw decorate{decorate{decorate{decorate{(12,-1) -- ++(60:1.5) -- ++(-60:1.5) -- cycle }}}};
\end{tikzpicture}
\caption{Five generations of the von Koch snowflake.}
\label{fig1}
\end{figure}

It is well-known~\cite{Bishop} that
$$
	\operatorname{dim}_F \partial S = \log_3 4.
$$

By Corollary~\ref{C2.1}, if
\begin{equation}
	\lambda > 1
	\label{E2.1.2}
\end{equation}
and
$$
	\lambda
	(
		2
		-
		\log_3 4
		-
		m
	)
	-
	2
	+
	\log_3 4
	-
	\sigma
	>
	0,
$$
then the singular set $S$ is removable for any weak solution of~\eqref{2.1}.

Now, we consider the critical exponent $\lambda = 1$ in condition~\eqref{E2.1.2}. 
In~\eqref{1.1}, let the function $f$ satisfy~\eqref{E2.1.1} and
\begin{equation}
	g (t)
	=
	t
	\log^\nu (e + t).
	\label{E2.1.3}
\end{equation}
As above, we assume that $\Omega$ is a bounded plane domain and $S$ is the von Koch snowflake.
By Theorem~\ref{T2.1}, if
\begin{equation}
	\nu > m
	\label{E2.1.4}
\end{equation}
and
\begin{equation}
	\sigma < -m,
	\label{E2.1.5}
\end{equation}
then the singular set $S$ is removable for any weak solution of~\eqref{1.1}.
This result can be generalized to the case of
\begin{equation}
	g (t)
	=
	t
	\log^m (t_0 + t)
	\ldots
	\log^m
	\underbrace{\log \ldots \log}_l	(t_l + t)
	\log^\nu
	\underbrace{\log \ldots \log}_{l + 1}
	(t_{l+1} + t)
	\label{E2.1.6}
\end{equation}
where $t_0 = e$ and $t_{i + 1} = e^{t_i}$, $i = 0, 1, \ldots, l$.
In this case, in accordance with Theorem~\ref{T2.1} conditions~\eqref{E2.1.4} and~\eqref{E2.1.5} also guarantee that the singular set $S$ is removable for any weak solution of~\eqref{1.1}.
\end{Example}

\begin{Example}\label{E2.2}
In~\eqref{2.1}, let the function $f$ satisfy~\eqref{E2.1.1} and
$$
	S = \underbrace{{\mathbf C} \times \cdots \times {\mathbf C}}_{n}
$$
be the Cantor dust, 
where ${\mathbf C}$ is the usual middle thirds Cantor set constructed as follows. 
At the first step, we remove the central third $(1/3, 2/3)$ from the interval $[0, 1]$. After this procedure, the two intervals $[0, 1/3]$ and $[2/3, 1]$ remain from the interval $[0, 1]$. 
Further, at each of the subsequent steps, we remove the central third from each interval remaining at the previous step.

For $3^{-i - 1} \le r < 3^{-i}$, the set $\partial S$ can be obviously covered with $2^{n i}$ open cubes with side lengths $3^{-i + 1}$ such that the union of these cubes contains $S_r$.
As the n-dimensional Lebesgue measure of the union of these cubes does not exceed 
$2^{n i} 3^{(-i + 1)n}$, the set $\partial S$ has finite $k$-dimension, where
$$
	k
	=
	n \log_3 2.
$$

By Corollary~\ref{C2.2}, if~\eqref{E2.1.2} is valid and
$$
	\lambda (n - n \log_3 2 - m) - n + n \log_3 2 - \sigma \ge 0,
$$
then the singular set $S$ is removable for any weak solution of~\eqref{2.1}.

Now, let us examine the case of the critical exponent $\lambda = 1$ in~\eqref{E2.1.2}. 
Consider inequality~\eqref{1.1}, where $f$ and $g$ are given by~\eqref{E2.1.1} and~\eqref{E2.1.3}, respectively.
If both conditions~\eqref{E2.1.4} and~\eqref{E2.1.5} are valid, then Theorem~\ref{T2.2} implies that the singular set $S$ is removable for any weak solution of~\eqref{1.1}. In its turn, if $\sigma = -m$,
then in accordance with Theorem~\ref{T2.2} the removability of the singular set $S$ for any weak solution of~\eqref{1.1} is guaranteed by~\eqref{E2.1.4} and the inequality
$$
	n - n \log_3 2 - m \ge 0.
$$
This statement is also true for the function $g$ given by~\eqref{E2.1.6}.
\end{Example}

\begin{Example}\label{E2.3}
In~\eqref{2.1}, let the function $f$ satisfy~\eqref{E2.1.1} and 
$$
	S
	=
	\{
		x = (x_1, \ldots, x_n) 
		:
		x_1^2 + \ldots + x_k^2 \le 1,
		\:
		x_{k+1} = \ldots = x_n = 0
	\},
$$
where $0 \le k < n$ is an integer. In the partial case $k = 0$, we have $S = \{ 0 \}$.

It can be easily seen that $\partial S$ has finite $k$-dimension. Moreover, $\partial S$ is a $k$-dimensional set.
Thus, if~\eqref{E2.1.2} is valid and
\begin{equation}
	\lambda (n - k - m) - n + k - \sigma \ge 0,
	\label{E2.3.1}
\end{equation}
then in accordance with Corollary~\ref{C2.2} the singular set $S$ is removable for any weak solution of~\eqref{2.1}.

We note that, in the case of the inequality
$$
	\Delta u
	\ge
	|u|^\lambda
	\quad
	\mbox{in } \Omega \setminus \{ 0 \},
$$
where $\Omega$ is a bounded domain of dimension $n \ge 3$ containing zero,
condition~\eqref{E2.3.1} coincides with the well-known Brezis--V\'eron condition
$$
	\lambda
	\ge
	\frac{n}{n - 2}
$$
obtained in~\cite{BV}.

Now, consider inequality~\eqref{1.1}, where $f$ and $g$ are defined by~\eqref{E2.1.1} and~\eqref{E2.1.3}.
By Theorem~\ref{T2.2}, the singular set $S$ is removable for any weak solution of~\eqref{1.1} if~\eqref{E2.1.4} and~\eqref{E2.1.5} are satisfied.
In so doing, for the critical exponent $\sigma = -m$, the removability of the singular set $S$ for any weak solution of~\eqref{1.1} is guaranteed by~\eqref{E2.1.4} and the condition
$$
	n - k - m \ge 0.
$$
This result can be obviously generalized to the case of the function $g$ defined by~\eqref{E2.1.6}. \end{Example}

\begin{Example}\label{E2.4}
In~\eqref{2.1}, let $S$ be the closure of the unit ball $B_1$.
As before, we assume that the function $f$ satisfies relation~\eqref{E2.1.1}.

It is clear that $\partial S$ has finite $(n - 1)$-dimension. Thus, by Corollary~\ref{C2.2}, if~\eqref{E2.1.2} is valid and
$$
	\lambda (1 - m) - 1 - \sigma \ge 0,
$$
then the singular set $S$ is removable for any weak solution of~\eqref{2.1}.

For weak solutions of~\eqref{1.1}, where $f$ and $g$ are defined by~\eqref{E2.1.1} and~\eqref{E2.1.3}, in accordance with Theorem~\ref{T2.2} the removability of the singular set $S$ is guaranteed by conditions~\eqref{E2.1.4} and~\eqref{E2.1.5}. 
This statement is also true for $g$ given by~\eqref{E2.1.6}.
In so doing, for the critical exponent $\sigma = -m$,
the singular set $S$ is removable if $m = 1$ and $\nu > 1$.
\end{Example}

\begin{Theorem}\label{T2.3}
Suppose that conditions~\eqref{T2.1.1} and \eqref{T2.1.2} are valid. Also let there exist a real number $\delta > 0$ such that~\eqref{T2.1.3} hold for all real numbers $C > 0$, where 
\begin{equation}
	k
	=
	n
	-
	\liminf_{r \to +0}
	\frac{
		\log \mes S_r \setminus S
	}{
		\log r
	}
	\label{T2.3.1}
\end{equation}
and $G^{-1}$ is the inverse function to~\eqref{T2.1.5}.
Then the singular set $S$ is removable for any weak solution of~\eqref{1.1}.
\end{Theorem}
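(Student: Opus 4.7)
The plan is to re-run the proof of Theorem~\ref{T2.1} after observing that the upper fractal dimension $\operatorname{\overline{dim}}_F\partial S$ enters that proof only through one measure estimate, which can be replaced by the one-sided bound coming from the definition~\eqref{T2.3.1}. I assume throughout that~\eqref{T2.1.1} and~\eqref{T2.1.2} hold.

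First, for each small $r>0$, choose a smooth cutoff $\varphi_r\in C^\infty(\mathbb R^n)$ that vanishes on $S_{r/2}$, equals $1$ on $\mathbb R^n\setminus S_r$, and satisfies $|\partial^\alpha\varphi_r|\le Cr^{-|\alpha|}$ for every $|\alpha|\le m$. Together with a standard cutoff $\psi\in C_0^\infty(\Omega)$, the product $(\varphi_r\psi)^M$, with $M$ chosen large enough to absorb the negative powers that would otherwise appear after differentiation, belongs to $C_0^\infty(\Omega\setminus S)$ and may be substituted into~\eqref{1.2}. The key structural observation is that every nonzero-order derivative of $\varphi_r$ is supported in the one-sided shell $S_r\setminus S_{r/2}\subset S_r\setminus S$.

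After substitution, I would apply $|a_\alpha(x,u)|\le A|u|$ and the Fenchel--Young inequality $ab\le g(a)+g^*(b)$ exactly as in the proof of Theorem~\ref{T2.1}. The contribution from the cutoff derivatives then splits into three pieces: (i)~a multiple of $\int f g(|u|)(\varphi_r\psi)^M\,dx$, which is absorbed into the right-hand side by taking the Fenchel--Young parameter small; (ii)~an integral of $\gamma(1/f)$ over $S_r\setminus S$, uniformly bounded in $r$ by~\eqref{T2.1.2}; and (iii)~a remainder term dominated for a suitable constant $C>0$ by
\[
	C\,\mes(S_r\setminus S)\,r^{-m}\,G^{-1}\!\Bigl(Cr\essinf_{S_r\setminus S} f^{1/m}\Bigr),
\]
in which the measure $\mes(S_r\setminus S)$ appears precisely because $\supp\partial^\alpha\varphi_r\subset S_r\setminus S$.

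The decisive step is to control (iii). By~\eqref{T2.3.1}, for every $\delta'>0$ there exists $r_0>0$ such that $\mes(S_r\setminus S)\le r^{n-k-\delta'}$ for all $r<r_0$. Choosing $\delta'=\delta/2$ with $\delta$ as in~\eqref{T2.1.3} bounds (iii) by
\[
	C\,r^{\delta/2}\cdot r^{n-k-m-\delta}\,G^{-1}\!\Bigl(Cr\essinf_{S_r\setminus S} f^{1/m}\Bigr),
\]
whose second factor tends to $0$ by~\eqref{T2.1.3} while the first factor tends to $0$ trivially. Letting $r\to+0$ and then $\psi\to 1$ on $\Omega$, Fatou's lemma together with the uniform a priori bounds obtained along the way yields $u\in L_1(\Omega)$, $f g(|u|)\in L_1(\Omega)$, and the validity of~\eqref{1.2} for all non-negative $\varphi\in C_0^\infty(\Omega)$.

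The main obstacle is bookkeeping rather than analysis: one must verify in the proof of Theorem~\ref{T2.1} that the only role played by $\operatorname{\overline{dim}}_F\partial S$ is to estimate the Lebesgue measure of the support of $\partial^\alpha\varphi_r$, and that this support always lies on the outside of $S$. Once this is confirmed, replacing $\mes((\partial S)_r)$ by the a priori smaller $\mes(S_r\setminus S)$---and consequently $\operatorname{\overline{dim}}_F\partial S$ by the quantity in~\eqref{T2.3.1}---is automatic, and Theorem~\ref{T2.3} follows from the proof of Theorem~\ref{T2.1} with that single modification.
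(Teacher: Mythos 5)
Your proposal misreads the logical structure of the paper and, more importantly, skips the one step that cannot be skipped.

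First, a structural point: in the paper the direction of deduction is opposite to what you assume. Theorem~\ref{T2.3} is the \emph{fundamental} result, proved directly, and Theorem~\ref{T2.1} is then a one-line corollary obtained by observing via~\eqref{PT2.1.1} that $\operatorname{\overline{dim}}_F\partial S \ge n - \liminf_{r\to+0}\frac{\log \mes S_r\setminus S}{\log r}$. So ``re-running the proof of Theorem~\ref{T2.1} with a weaker measure estimate'' is not a meaningful plan, because the proof of Theorem~\ref{T2.1} is just an invocation of Theorem~\ref{T2.3}.

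The decisive gap, however, is item (iii) in your decomposition: you assert that a single substitution of $(\varphi_r\psi)^M$ into~\eqref{1.2}, followed by $|a_\alpha(x,u)|\le A|u|$ and Fenchel--Young, produces a remainder term dominated by
\[
	C\,\mes(S_r\setminus S)\,r^{-m}\,G^{-1}\!\Bigl(Cr\essinf_{S_r\setminus S} f^{1/m}\Bigr).
\]
But the Fenchel--Young inequality $ab\le g(a)+g^*(b)$ produces $g^*$ (hence $\gamma$) terms; it does not and cannot produce the function $G^{-1}$, where $G(t)=\int_t^\infty g^{-1/m}(\zeta)\,\zeta^{1/m-1}\,d\zeta$ is a Keller--Osserman--type integral involving $g$ itself, not $g^*$. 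In the paper the appearance of $G^{-1}$ is the content of Lemma~\ref{L3.4}, which is proved by a nontrivial iteration: one assumes the negation of removability (so that~\eqref{3.1} from Lemma~\ref{L3.3} holds), constructs a sequence $\rho_1>\rho_2>\cdots$ on which $J(\rho)=\frac{1}{\mes S_{\rho_2}\setminus S_{\rho_1}}\int_{S_{\rho_2}\setminus S_\rho}|u|\,dx$ successively doubles, applies convexity of $g$ via Jensen, and sums telescopically to turn the doubling chain into the integral defining $G$. None of this is present or even hinted at in your argument. Related to this: you speak of absorbing term (i) by ``taking the Fenchel--Young parameter small,'' but the inequality $ab\le g(a)+g^*(b)$ has no free parameter for a general convex $g$, and scaling $a$ or $b$ changes \emph{both} sides of the comparison in a way that is not harmless outside the power case. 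Finally, the passage $r\to+0$, $\psi\to1$ with Fatou requires you to already control $\frac{1}{(r-\rho)^m}\int_{S_r\setminus S_\rho}|u|\,dx$ along a sequence $r\to0$; that control is exactly what Lemmas~\ref{L3.3} and~\ref{L3.4} are for. As written, your proposal replaces the core iteration argument with an estimate that is simply asserted, so the proof does not go through.
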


\begin{Theorem}\label{T2.4}
Suppose that conditions~\eqref{T2.1.1} and \eqref{T2.1.2} are valid.
Also let $k$ be a real number such that
\begin{equation}
	\limsup_{r \to +0}
	\frac{
		\mes S_r \setminus S
	}{
		r^{n - k}
	}
	<
	\infty
	\label{T2.4.1}
\end{equation}
and, moreover,~\eqref{T2.2.1} and \eqref{T2.2.2} hold for all real numbers $C > 0$,
where $G^{-1}$ is the inverse function to~\eqref{T2.1.5}.
Then the singular set $S$ is removable for any weak solution of~\eqref{1.1}.
\end{Theorem}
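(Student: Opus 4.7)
The plan is to follow the strategy used to prove Theorem~\ref{T2.2} and observe that the only place where the finite $k$-dimension hypothesis on $\partial S$ enters that earlier argument is through the measure bound $\mes(S_r \setminus S) \le C r^{n - k}$ on the annular region where cutoff functions change from $0$ to $1$. Since $S$ is compact, any point of $S_r \setminus S$ lies within distance $r$ from $\partial S$, so $S_r \setminus S \subseteq (\partial S)_r$, and finite $k$-dimension of $\partial S$ indeed yields this bound. Hypothesis~\eqref{T2.4.1} postulates the same measure estimate directly, so the remainder of the proof of Theorem~\ref{T2.2} carries over without further modification.

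Concretely, I would fix a family of smooth cutoffs $\eta_r \in C^\infty({\mathbb R}^n)$ with $\eta_r \equiv 0$ on $S_{r / 2}$, $\eta_r \equiv 1$ outside $S_r$, and $|\partial^\alpha \eta_r| \le C_\alpha r^{- |\alpha|}$ for $|\alpha| \le m$. For any nonnegative $\varphi \in C_0^\infty(\Omega)$ the product $\varphi \eta_r$ lies in $C_0^\infty(\Omega \setminus S)$ and may be used in~\eqref{1.2}. Expanding $\partial^\alpha(\varphi \eta_r)$ by the Leibniz rule splits each left-hand integral into a principal contribution $\int_\Omega a_\alpha(x, u)\, \eta_r\, \partial^\alpha \varphi\, dx$ and error contributions
$$
E_{\alpha, \beta}(r) = \int_{S_r \setminus S_{r / 2}} a_\alpha(x, u)\, \partial^\beta \varphi\, \partial^{\alpha - \beta} \eta_r\, dx, \qquad 1 \le |\alpha - \beta| \le m,
$$
supported in the transition annulus $S_r \setminus S_{r / 2}$. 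As $r \to +0$, the principal terms and $\int f g(|u|) \varphi \eta_r\, dx$ converge to the corresponding integrals without $\eta_r$ by dominated convergence, once one upgrades the initial local integrability $u, f g(|u|) \in L^1(\Omega \setminus S_r)$ to global integrability on $\Omega$. The Keller--Osserman type condition~\eqref{T2.1.1} together with~\eqref{T2.1.2} delivers this upgrade.

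The heart of the argument, and the step I expect to be the main obstacle, is to show $E_{\alpha, \beta}(r) \to 0$. Bound $|a_\alpha(x, u)| \le A |u|$ and apply the Fenchel--Young inequality in the form $|u|\, b \le f(x) g(|u|) + b\, \gamma(b / f(x))$, obtained from $a b \le g(a) + g^*(b)$ after multiplying by $f(x)$, with weight $b$ of order $r^{- |\alpha - \beta|}$. This splits $E_{\alpha, \beta}(r)$ into a piece absorbed into the right-hand side of~\eqref{1.2} thanks to~\eqref{T2.2.2} and a residual integral controlled by~\eqref{T2.1.2}. A pointwise a priori estimate for $|u|$ in terms of $\dist(x, S)$ and $f$, extracted from the Keller--Osserman hypothesis~\eqref{T2.1.1}, is then combined with the measure estimate $\mes(S_r \setminus S_{r / 2}) \le C r^{n - k}$ from~\eqref{T2.4.1} and with hypothesis~\eqref{T2.2.1} to force both pieces of $E_{\alpha, \beta}(r)$ to vanish in the limit.

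Passing to the limit $r \to +0$ in the weak formulation then yields~\eqref{1.2} for every nonnegative $\varphi \in C_0^\infty(\Omega)$, which is the removability of $S$. The most delicate technical ingredient, if the proof were built from scratch, is the pointwise a priori bound on $|u|$ furnished by the Keller--Osserman hypothesis~\eqref{T2.1.1}; once it is in place, the passage from Theorem~\ref{T2.2} to Theorem~\ref{T2.4} is the bookkeeping remark that the measure estimate~\eqref{T2.4.1} is precisely what the finite $k$-dimension hypothesis was used to derive in the earlier theorem.
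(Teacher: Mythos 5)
Your proposal has a genuine gap at what you yourself identify as the heart of the argument. The ``pointwise a priori estimate for $|u|$ in terms of $\dist(x,S)$ and $f$'' that you propose to extract from the Keller--Osserman condition~\eqref{T2.1.1} does not exist in this setting: inequality~\eqref{1.1} carries no ellipticity or sign structure (only the growth bound $|a_\alpha(x,\zeta)|\le A|\zeta|$), so no maximum-principle-type pointwise bound on $u$ near $S$ is available; indeed the paper emphasizes that no growth restriction on $u$ near the singularity is imposed. The actual substitute is an \emph{integral} estimate (Lemma~\ref{L3.4}) bounding the \emph{average} of $|u|$ over an annulus $S_{\rho_2}\setminus S_{\rho_1}$ by $G^{-1}$ of the data, and even that estimate is only derived under the contradiction hypothesis~\eqref{3.1}, i.e.\ after assuming the singularity is not removable and invoking Lemma~\ref{L3.3}. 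Your sketch never explains how the error terms $E_{\alpha,\beta}(r)$ are to be controlled without such a lemma.

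More structurally, the plan to show $E_{\alpha,\beta}(r)\to 0$ directly cannot succeed with the hypotheses of Theorem~\ref{T2.4}: condition~\eqref{T2.2.1} only asserts that $r^{n-k-m}G^{-1}\bigl(Cr\,\essinf_{S_r\setminus S} f^{1/m}\bigr)$ has finite $\limsup$, not that it vanishes, so combining~\eqref{T2.4.1} with the integral estimate yields only that the annular quantities $(r_i-r_{i+1})^{-m}\int_{S_{r_i}\setminus S_{r_{i+1}}}|u|\,dx$ are \emph{bounded} along a subsequence. In the paper this boundedness serves solely to conclude, via Lemma~\ref{L3.2}, that $\int_{S_{\varepsilon/2}\setminus S}f\,g(|u|)\,dx<\infty$, whence the annular energies $\int_{S_{r_i}\setminus S_{r_{i+1}}}f\,g(|u|)\,dx$ tend to zero; the contradiction then comes from the opposite side, where convexity of $g$, the lower bound~\eqref{3.1}, and hypothesis~\eqref{T2.2.2} (through $\mu$) force those same annular energies to stay bounded away from zero. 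Your sketch assigns~\eqref{T2.2.2} only the vague role of ``absorbing a piece into the right-hand side,'' which is not how it enters; without the contradiction framework and this two-sided estimate the proof does not close. (Your opening observation that~\eqref{T2.4.1} is exactly the measure bound supplied by finite $k$-dimension of $\partial S$ via $S_r\setminus S\subset(\partial S)_r$ is correct, but the logical dependence in the paper runs the other way: Theorem~\ref{T2.2} is deduced from Theorem~\ref{T2.4}, so there is no independent proof of Theorem~\ref{T2.2} to fall back on.)
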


Theorems~\ref{T2.3} and~\ref{T2.4} are proved in Section~\ref{proof}.

\begin{Remark}\label{R2.2}
In Theorems~\ref{T2.1} -- \ref{T2.4} and Corollaries~\ref{C2.1} and~\ref{C2.2}, the condition $g (0) = 0$ can be dropped if $f (x) g (0) = 0$ almost everywhere on $S$.
\end{Remark}

\section{Proof of Theorems~\ref{T2.1} -- \ref{T2.4}}\label{proof}

In this section, we assume that $u$ is a weak solution of~\eqref{1.1}. 
Take a real number $\varepsilon \in (0, 1)$ such that the closure of $S_\varepsilon$ in ${\mathbb R}^n$ belongs to $\Omega$.
Since $S$ is a compact subset of $\Omega$, such a real number obviously exists.

We need the following simple lemma.

\begin{Lemma}\label{L3.1}
For any set $\omega \subset {\mathbb R}^n$ and real number $r > 0$ there exists a non-negative function $\psi \in C^\infty ({\mathbb R}^n)$ such that
$$
	\left.
		\psi
	\right|_{
		\omega
	}
	=
	1,
	\quad
	\left.
		\psi
	\right|_{
		{\mathbb R}^n \setminus \omega_r
	}
	=
	0,
	\quad
	\mbox{and}
	\quad
	\| \psi \|_{
		C^m ({\mathbb R}^n)
	}
	\le
	C r^{-m},
$$
where the constant $C > 0$ depends only on $n$ and $m$.
\end{Lemma}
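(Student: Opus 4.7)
The plan is to construct $\psi$ by mollifying the characteristic function of an intermediate neighborhood of $\omega$. First I fix a standard mollifier: a non-negative function $\eta \in C_0^\infty ({\mathbb R}^n)$ with $\supp \eta \subset B_1$ and $\int \eta \, dx = 1$, and set $\eta_\rho (x) = \rho^{-n} \eta (x/\rho)$ with $\rho = r/4$. The point is that although $\omega$ itself may be neither open nor measurable, its $s$-neighborhood $\omega_s$ is always open (hence measurable) by construction, so I can legitimately work with the characteristic function $\chi$ of the open set $\omega_{r/2}$. I then define
$$
	\psi (x)
	=
	(\chi * \eta_\rho) (x)
	=
	\int_{{\mathbb R}^n}
	\chi (y)
	\eta_\rho (x - y)
	\, dy.
$$

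Next I would verify the three required properties. Smoothness and non-negativity are immediate from the convolution. If $x \in \omega$, then for every $y \in B_\rho (x)$ we have $\dist (y, \omega) \le \rho < r/2$, so $B_\rho (x) \subset \omega_{r/2}$ and therefore $\psi (x) = \int \eta_\rho (x - y) \, dy = 1$. If $x \notin \omega_r$, then for every $y \in B_\rho (x)$ we have $\dist (y, \omega) \ge \dist (x, \omega) - \rho > r - \rho > r/2$, so $B_\rho (x) \cap \omega_{r/2} = \emptyset$ and $\psi (x) = 0$. For the derivative bound, I differentiate under the integral: $\partial^\alpha \psi = \chi * \partial^\alpha \eta_\rho$, whence
$$
	|\partial^\alpha \psi (x)|
	\le
	\| \partial^\alpha \eta_\rho \|_{L^1 ({\mathbb R}^n)}
	=
	\rho^{- |\alpha|}
	\| \partial^\alpha \eta \|_{L^1 ({\mathbb R}^n)}
$$
by the standard scaling of derivatives of a mollifier. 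Summing over $|\alpha| \le m$ and absorbing $\eta$-dependent constants into a constant $C = C(n,m)$ yields $\| \psi \|_{C^m ({\mathbb R}^n)} \le C \rho^{-m} \le C' r^{-m}$, as required.

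There is no serious obstacle here; the construction is entirely standard. The only point that needs a moment of care is that $\omega$ is allowed to be arbitrary in the statement, so one must not try to convolve with $\chi_\omega$ directly. Placing the mollification radius $\rho = r/4$ strictly between $0$ and $r/2$ guarantees both the value $1$ on $\omega$ and the vanishing outside $\omega_r$ with room to spare, so no delicate tuning is needed.
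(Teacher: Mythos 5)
Your proof is correct, but it takes a genuinely different route from the paper's. The paper constructs $\psi$ via a Vitali-type packing argument: it greedily selects points $x_i \in \omega$ so that the balls $B_{r/4}^{x_i}$ are pairwise disjoint while the balls $B_{r/2}^{x_i}$ cover $\omega$, observes that any point lies in at most $5^n$ of the balls $B_r^{x_i}$, sums scaled bump functions $\varphi_i$ centered at the $x_i$, and finally composes the (possibly $>1$) sum with a fixed cutoff $\eta$ to flatten it to $1$ on $\omega$. Your proof instead mollifies the characteristic function of the intermediate open neighborhood $\omega_{r/2}$; you correctly note that although $\omega$ itself may fail to be measurable, $\omega_{r/2}$ is always open (since $\dist(\cdot,\omega)$ is $1$-Lipschitz), so the convolution is legitimate. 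The mollification route is shorter and avoids the bounded-overlap counting, while the paper's covering route avoids any appeal to measure and integration. One cosmetic remark shared by both arguments: since $\psi\equiv 1$ on $\omega$, one always has $\|\psi\|_{C^m}\ge 1$, so the stated bound $\|\psi\|_{C^m}\le Cr^{-m}$ with $C=C(n,m)$ can only hold for $r$ bounded above (say $r\le 1$); this is harmless because in the paper the lemma is only invoked with $r<\varepsilon<1$, and your derivation $\rho^{-|\alpha|}\le\rho^{-m}$ likewise implicitly uses $\rho\le 1$.
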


\begin{proof}
We construct a sequence (countable or finite) of points $x_i \in \omega$, $i = 1,2,\ldots$, as follows.
Let $x_1$ be an arbitrary point belonging to $\omega$.
Assume further that $x_j$ is already constructed for all $1 \le j \le i$. If
$$
	\omega 
	\subset
	\bigcup_{j=1}^i
	B_{r / 2}^{x_j},
$$
then we stop; otherwise we take 
$$
	x_{i+1} 
	\in 
	\omega 
	\setminus
	\bigcup_{j=1}^i
	B_{r / 2}^{x_j}
$$
such that
$$
	|x_{i+1}|
	<
	\inf 
	\left
	\{ 
		|x| 
		: 
		x
		\in 
		\omega 
		\setminus
		\bigcup_{j=1}^i
		B_{r / 2}^{x_j}
	\right\}
	+
	1.
$$
Since
$
	B_{r / 4}^{x_i} 
	\cap 
	B_{r / 4}^{x_j} 
	= 
	\emptyset
$
if $i \ne j$, we obviously obtain
$$
	\omega
	\subset
	\bigcup_i
	B_{r / 2}^{x_i}.
$$
In so doing, for any point $x \in {\mathbb R}^n$ the number of the balls $B_r^{x_i}$
containing $x$ does not exceed a certain value depending only on $n$.
Really, if $x \in B_r^{x_i}$, then 
$
	B_{r / 4}^{x_i} 
	\subset 
	B_{5 r / 4}^x.
$
Therefore, the number of the balls $B_r^{x_i}$ containing $x$ can not exceed 
$
	\mes B_{5 r / 4} / \mes B_{r / 4} = 5^n.
$

Consider a non-negative function $\varphi \in C_0^\infty (B_1)$ equal to one on the ball $B_{1/2}$.
Putting
$$
	\varphi_i (x)
	=
	\varphi
	\left(
		\frac{
			x - x_i
		}{
			r
		}
	\right),
$$
we obtain
$$
	\sum_i
	\varphi_i (x)
	\ge
	1
$$
for all $x \in \omega$ and
$$
	\sum_i
	\varphi_i (x)
	=
	0
$$
for all $x \in {\mathbb R}^n \setminus \omega_r$.
Thus, to complete the proof, it remains to take
$$
	\psi (x)
	=
	\eta
	\left(
		\sum_i
		\varphi_i (x)
	\right),
$$
where $\eta \in C^\infty ({\mathbb R})$ is a non-negative function such that
$$
	\left.
		\eta
	\right|_{
		(- \infty, 0]
	}
	=
	0
	\quad
	\mbox{and}
	\quad
	\left.
		\eta
	\right|_{
		[1, \infty)
	}
	=
	1.
$$
\end{proof}

The next two lemmas generalize Lemmas~3.1 and 3.2 of paper~\cite{KSANS}, 
where the case of $\Omega = B_1$ and $S = \{ 0 \}$ was considered. 

\begin{Lemma}\label{L3.2}
Let $0 < \rho < r \le \varepsilon / 2$ be real numbers. Then
\begin{equation}
	\int_{
		S_\varepsilon \setminus S_{\varepsilon / 2}
	}
	|u|
	\,
	dx
	+
	\frac{
		1
	}{
		(r - \rho)^m
	}
	\int_{
		S_r \setminus S_\rho
	}
	|u|
	\,
	dx
	\ge
	C
	\int_{
		S_{\varepsilon / 2} \setminus S_r
	}
	f (x)
	g (|u|)
	\,
	dx,
	\label{L3.2.1}
\end{equation}
where the constant $C > 0$ depends only on $A$, $n$, $m$, and $\varepsilon$.
\end{Lemma}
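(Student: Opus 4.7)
The plan is to plug a carefully engineered test function $\varphi$ into the weak inequality \eqref{1.2}, with $\varphi$ equal to $1$ on the annular region $S_{\varepsilon/2} \setminus S_r$ and vanishing both near $S$ (inside $S_\rho$) and outside $S_\varepsilon$. The function $\varphi$ will be built as a product $\varphi = \psi_1 \psi_2$, where $\psi_1$ is an inner cutoff (zero on $S_\rho$, one outside $S_r$) and $\psi_2$ is an outer cutoff (one on $S_{\varepsilon/2}$, zero outside $S_\varepsilon$). Both will be produced by Lemma~\ref{L3.1}: $\psi_1$ applied to $\omega = S_\rho$ with radius $r - \rho$, giving $\|\psi_1\|_{C^m} \le C(r-\rho)^{-m}$, and $\psi_2$ constructed analogously (for instance, as $1$ minus the cutoff of $\mathbb{R}^n \setminus S_\varepsilon$ thickened inward by $\varepsilon/2$), giving $\|\psi_2\|_{C^m} \le C \varepsilon^{-m}$. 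Such a $\varphi$ lies in $C_0^\infty(\Omega \setminus S)$ since the closure of $S_\varepsilon$ is contained in $\Omega$ and $\varphi$ vanishes on $S_\rho \supset S$.

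Next, I would analyze the support of $\partial^\alpha \varphi$ for $|\alpha| = m$. Because $r \le \varepsilon/2$, the two transition zones are disjoint: on $S_r$ we have $\psi_2 \equiv 1$ (so its derivatives vanish there), and on $\mathbb{R}^n \setminus S_{\varepsilon/2}$ we have $\psi_1 \equiv 1$. Thus the Leibniz expansion of $\partial^\alpha (\psi_1 \psi_2)$ collapses: on $S_r \setminus S_\rho$ it reduces to $\partial^\alpha \psi_1$, bounded by $C(r-\rho)^{-m}$; on $S_\varepsilon \setminus S_{\varepsilon/2}$ it reduces to $\partial^\alpha \psi_2$, bounded by $C \varepsilon^{-m}$; elsewhere it is zero. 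This cleanly separates the left-hand side of \eqref{1.2} into two pieces supported on the two annuli appearing in \eqref{L3.2.1}.

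Now I would estimate the right-hand side of \eqref{1.2} from below by $\int_{S_{\varepsilon/2}\setminus S_r} f(x) g(|u|)\, dx$, using $\varphi \ge 0$ and $\varphi \equiv 1$ on $S_{\varepsilon/2}\setminus S_r$. For the left-hand side, I would apply the pointwise bound $|a_\alpha(x,u)| \le A|u|$ together with the derivative bounds above to obtain
\begin{equation*}
\int_{S_{\varepsilon/2} \setminus S_r} f(x) g(|u|)\, dx
\le
C_1 \varepsilon^{-m} \int_{S_\varepsilon \setminus S_{\varepsilon/2}} |u|\, dx
+
\frac{C_2}{(r-\rho)^m}\int_{S_r \setminus S_\rho} |u|\, dx,
\end{equation*}
which, after absorbing the fixed factor $\varepsilon^{-m}$ into the $\varepsilon$-dependent constant allowed by the statement, is exactly \eqref{L3.2.1}.

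The main obstacle is essentially bookkeeping rather than substance: producing the two cutoffs with the correct tube geometry (the thickenings $S_\rho, S_r, S_{\varepsilon/2}, S_\varepsilon$ are nested but irregular, since $S$ is only compact) and verifying that $\varphi \in C_0^\infty(\Omega\setminus S)$ is admissible in \eqref{1.2}. The product structure is essential, because a single cutoff produced by Lemma~\ref{L3.1} cannot simultaneously satisfy the two derivative bounds at the two different scales $r-\rho$ and $\varepsilon$. Once the test function is in place, the rest is a direct application of the weak formulation and the linear growth of $a_\alpha$.
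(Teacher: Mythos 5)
Your proposal is correct and follows essentially the same route as the paper: both construct the test function as a product of an inner cutoff (zero on $S_\rho$, one outside $S_r$, with $C^m$-norm $\lesssim (r-\rho)^{-m}$) and an outer cutoff at the fixed scale $\varepsilon$, both obtained from Lemma~\ref{L3.1}, and then estimate the Leibniz terms of \eqref{1.2} exactly as you describe. Your extra observation that the two transition zones are disjoint (so the genuine cross terms vanish) is a slight simplification of the paper's bookkeeping, but not a different argument.
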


\begin{proof}
We agree to denote by $C$ various positive constants which can depend 
only on $A$, $n$, $m$, and $\varepsilon$.
In accordance with Lemma~\ref{L3.1} there is a non-negative function 
$\tau \in C^\infty ({\mathbb R}^n)$ 
with the norm
$
	\| \tau \|_{
		C^m ({\mathbb R}^n)
	}
$
depending only on $n$, $m$, and $\varepsilon$ such that
$$
	\left.
		\tau
	\right|_{
		S_{\varepsilon / 2}
	}
	=
	1
	\quad
	\mbox{and}
	\quad
	\left.
		\tau
	\right|_{
		\Omega \setminus S_\varepsilon
	}
	=
	0.
$$
Analogously, there exists a non-negative function
$\psi \in C^\infty ({\mathbb R}^n)$ satisfying the conditions
\begin{equation}
	\left.
		\psi
	\right|_{
		S_\rho
	}
	=
	0,
	\quad
	\left.
		\psi
	\right|_{
		\Omega \setminus S_r
	}
	=
	1,
	\quad
	\mbox{and}
	\quad
	\| \psi \|_{
		C^m ({\mathbb R}^n)
	}
	\le
	C (r - \rho)^{-m}.
	\label{PL3.2.1}
\end{equation}
Taking
$$
	\varphi (x)
	=
	\psi (x) \tau (x)
$$
as a test function in~\eqref{1.2}, we obtain
\begin{align}
	&
	\int_\Omega
	\sum_{|\alpha| = m}
	(-1)^m
	a_\alpha (x, u)
	\psi
	\partial^\alpha
	\tau
	\,
	dx
	+
	\int_\Omega
	\sum_{
		|\alpha'| + |\alpha''| = m,
		\;
		|\alpha'| > 1
	}
	a_{\alpha' \alpha''} (x, u)
	\partial^{\alpha'}
	\psi
	\partial^{\alpha''}
	\tau
	\,
	dx
	\nonumber
	\\
	&
	\qquad
	\ge
	\int_\Omega
	f (x)
	g (|u|)
	\psi
	\tau
	\,
	dx,
	\label{PL3.2.2}
\end{align}
where $a_{\alpha' \alpha''}$ are some Caratheodory functions such that
\begin{equation}
	|a_{\alpha' \alpha''} (x, \zeta)|
	\le
	C
	|\zeta|,
	\quad
	|\alpha'| + |\alpha''| = m,
	\;
	|\alpha'| > 1,
	\label{PL3.2.3}
\end{equation}
for almost all $x \in \Omega$ and for all $\zeta \in {\mathbb R}$.
In view of~\eqref{PL3.2.1}, we have
$$
	\left|
		\int_\Omega
		\sum_{
			|\alpha'| + |\alpha''| = m,
			\;
			|\alpha'| > 1
		}
		a_{\alpha' \alpha''} (x, u)
		\partial^{\alpha'}
		\psi
		\partial^{\alpha''}
		\tau
		\,
		dx
	\right|
	\le
	\frac{
		C
	}{
		(r - \rho)^m
	}
	\int_{
		S_r \setminus S_\rho
	}
	|u|
	\,
	dx.
$$
It can also be seen that
$$
	\left|
		\int_\Omega
		\sum_{|\alpha| = m}
		(-1)^m
		a_\alpha (x, u)
		\psi
		\partial^\alpha
		\tau
		\,
		dx
	\right|
	\le
	C
	\int_{
		S_\varepsilon \setminus S_{\varepsilon / 2}
	}
	|u|
	\,
	dx
$$
and
$$
	\int_\Omega
	f (x)
	g (|u|)
	\psi
	\tau
	\,
	dx
	\ge
	\int_{
		S_{\varepsilon / 2} \setminus S_r
	}
	f (x)
	g (|u|)
	\,
	dx.
$$
Thus,~\eqref{PL3.2.2} implies~\eqref{L3.2.1}.
\end{proof}

\begin{Lemma}\label{L3.3}
Suppose that
$$
	\frac{
		1
	}{
		(r_i - r_{i + 1})^m
	}
	\int_{
		S_{r_i} \setminus S_{r_{i + 1}}
	}
	|u|
	\,
	dx
	\to
	0
	\quad
	\mbox{as } i \to \infty
$$
for some decreasing sequence of real numbers $r_i \in (0, \varepsilon / 2)$, $i = 1,2,\ldots$,
tending to zero as $i \to \infty$.
Then the singular set $S$ is removable.
\end{Lemma}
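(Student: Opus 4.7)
The plan is to verify the three conditions in the definition of removability — local integrability of $u$ and of $fg(|u|)$ over all of $\Omega$, plus the extended weak inequality \eqref{1.2} with test functions from $C_0^\infty(\Omega)$ — by cutting off the singular set with the smooth plateau functions supplied by Lemma~\ref{L3.1}. Throughout, extend $u$ by zero on $S$; since $|a_\alpha(x,\zeta)| \le A|\zeta|$ and $g(0)=0$, both $a_\alpha(x,u)$ and $fg(|u|)$ then vanish on $S$, so each integral over $\Omega$ coincides with the corresponding one over $\Omega \setminus S$.

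First I would establish the two integrability statements. Applying Lemma~\ref{L3.2} with $r=r_i$ and $\rho=r_{i+1}$, the hypothesis of the present lemma forces the second term on the left of \eqref{L3.2.1} to zero, while monotone convergence on the right (as $S_{\varepsilon/2}\setminus S_{r_i}\nearrow S_{\varepsilon/2}\setminus S$) yields
\begin{equation*}
   \int_{S_{\varepsilon/2}\setminus S} f(x) g(|u|)\, dx \le \frac{1}{C}\int_{S_\varepsilon\setminus S_{\varepsilon/2}} |u|\, dx < \infty,
\end{equation*}
and hence $fg(|u|)\in L_1(\Omega)$. For $u$ itself, the hypothesis reads $\int_{S_{r_i}\setminus S_{r_{i+1}}}|u|\,dx = \epsilon_i(r_i-r_{i+1})^m$ with $\epsilon_i\to 0$; for $i$ large enough that $r_i-r_{i+1}\le 1$ the inequality $(r_i-r_{i+1})^m\le r_i-r_{i+1}$ (which uses $m\ge 1$) lets the telescoping series $\sum(r_i-r_{i+1})=r_1-\lim r_i$ majorize $\sum(r_i-r_{i+1})^m$. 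Since the annuli $S_{r_i}\setminus S_{r_{i+1}}$ disjointly exhaust $S_{r_1}\setminus S$, this bounds $\int_{S_{r_1}\setminus S}|u|\,dx$ by a finite constant, and so $u\in L_1(\Omega)$.

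Next I would extend \eqref{1.2} to an arbitrary nonnegative $\varphi\in C_0^\infty(\Omega)$. Invoke Lemma~\ref{L3.1} with $\omega=S_{r_{i+1}}$ and $r=r_i-r_{i+1}$ to obtain $\psi_i\in C^\infty({\mathbb R}^n)$, $0\le\psi_i\le 1$, equal to $1$ on $S_{r_{i+1}}$, vanishing outside $S_{r_i}$, with $\|\psi_i\|_{C^m}\le C(r_i-r_{i+1})^{-m}$ and with derivatives of order $\ge 1$ supported on the annulus $S_{r_i}\setminus S_{r_{i+1}}$. Then $\varphi_i:=\varphi(1-\psi_i)\in C_0^\infty(\Omega\setminus S)$ is an admissible test function in \eqref{1.2}. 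Expanding $\partial^\alpha\varphi_i = \partial^\alpha\varphi - \partial^\alpha(\varphi\psi_i)$ by Leibniz, every term containing $\partial^{\alpha-\beta}\psi_i$ with $|\alpha-\beta|\ge 1$ contributes at most $C\|\varphi\|_{C^m}(r_i-r_{i+1})^{-|\alpha-\beta|}\int_{S_{r_i}\setminus S_{r_{i+1}}}|u|\,dx = O\!\left(\epsilon_i(r_i-r_{i+1})^{m-|\alpha-\beta|}\right)\to 0$; the remaining piece $a_\alpha(x,u)\partial^\alpha\varphi\cdot\psi_i$ is dominated pointwise by $A\|\varphi\|_{C^m}|u|$ on the shrinking set $S_{r_i}$, and its integral tends to zero by absolute continuity of the integral of $|u|\in L_1(\Omega)$ since $|S_{r_i}\setminus S|\to 0$. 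On the right-hand side, dominated convergence with majorant $fg(|u|)\varphi\in L_1(\Omega)$ gives $\int fg(|u|)\varphi_i\,dx\to\int fg(|u|)\varphi\,dx$. Passing to the limit in \eqref{1.2} applied to $\varphi_i$ then produces \eqref{1.2} for $\varphi$.

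The principal difficulty lies in the delicate matching between the blow-up rate of the cutoff and the hypothesis: the top-order derivative $\partial^\alpha\psi_i$ grows like $(r_i-r_{i+1})^{-m}$, so the critical Leibniz term is exactly $(r_i-r_{i+1})^{-m}\int_{S_{r_i}\setminus S_{r_{i+1}}}|u|\,dx$, which the hypothesis of Lemma~\ref{L3.3} is tailor-made to annihilate. The ancillary obstacle is that controlling the zeroth-order Leibniz piece and passing to the limit on the right-hand side both require the $L_1(\Omega)$ integrability of $u$ and of $fg(|u|)$ established in the first stage; the telescoping bound $\sum_i(r_i-r_{i+1})^m<\infty$, which rests on $m\ge 1$, is the key device for upgrading the sequential hypothesis into those global integrability statements.
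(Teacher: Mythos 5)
Your proof is correct, and it is a genuinely different route for one of the two integrability statements. You and the paper agree on the overall architecture: (i) use Lemma~\ref{L3.2} with $r=r_i$, $\rho=r_{i+1}$ to establish $\int_{S_{\varepsilon/2}\setminus S}fg(|u|)\,dx<\infty$, and (ii) cut off near $S$ with the Lemma~\ref{L3.1} plateau functions, expand by Leibniz, kill the top-order terms using the lemma's hypothesis, and pass to the limit in \eqref{1.2}. The divergence is in how $u\in L_1(\Omega)$ is obtained: the paper writes $|u|\le f(x)g(|u|)+\gamma(1/f(x))$ via Fenchel--Young and then invokes hypothesis~\eqref{T2.1.2} to integrate $\gamma(1/f)$, whereas you sum the hypothesis directly over the annuli $S_{r_i}\setminus S_{r_{i+1}}$, using $(r_i-r_{i+1})^m\le r_i-r_{i+1}$ (for $m\ge 1$, $r_i<1$) and the telescoping identity $\sum_i(r_i-r_{i+1})=r_1$. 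Your route is more elementary and does not invoke~\eqref{T2.1.2} at all, so it matches the letter of the lemma (which does not list~\eqref{T2.1.2} among its hypotheses even though the paper's own proof quietly uses it). That is a small but real improvement.

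Two minor cautions. First, you set $\varphi_i=\varphi(1-\psi_i)$ and need $\varphi_i\ge 0$, which requires $\psi_i\le 1$; Lemma~\ref{L3.1} as stated does not assert $\psi\le 1$ (though the construction can clearly be arranged to give it, e.g.\ by taking $\eta\le 1$). The paper sidesteps this by using the complementary cutoff and the test function $\psi_i\varphi$ directly. Second, you quoted the sharper derivative bound $\|\partial^{\alpha-\beta}\psi_i\|_\infty\lesssim(r_i-r_{i+1})^{-|\alpha-\beta|}$; Lemma~\ref{L3.1} only states the uniform bound $\|\psi_i\|_{C^m}\lesssim(r_i-r_{i+1})^{-m}$. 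This costs you nothing, since the coarse bound already yields a contribution of order $\epsilon_i\to 0$ for every Leibniz term with at least one derivative on $\psi_i$, but it is worth phrasing the estimate so that it visibly rests only on what the lemma provides.
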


\begin{proof}
As in the proof of Lemma~\ref{L3.2}, 
by $C$ we denote various positive constants which can depend 
only on $A$, $n$, $m$, and $\varepsilon$.
From the Fenchel-Young inequality, it follows that
$$
	\int_{
		S_{\varepsilon / 2} \setminus S
	}
	|u|
	\,
	dx
	\le
	\int_{
		S_{\varepsilon / 2} \setminus S
	}
	f (x) 
	g (|u|)
	\,
	dx
	+
	\int_{
		S_{\varepsilon / 2} \setminus S
	}
	f (x) 
	g^* 
	\left(
		\frac{1}{f (x)}
	\right)
	\,
	dx.
$$
Putting $r = r_i$ and $\rho = r_{i+1}$ in Lemma~\ref{L3.2} and passing to the limit as $i \to \infty$,
we obtain
\begin{equation}
	\int_{
		S_{\varepsilon / 2} \setminus S
	}
	f (x)
	g (|u|)
	\,
	dx
	<
	\infty.
	\label{PL3.3.1}
\end{equation}
At the same time,
$$
	\int_{
		S_{\varepsilon / 2} \setminus S
	}
	f (x) 
	g^* 
	\left(
		\frac{1}{f (x)}
	\right)
	\,
	dx
	=
	\int_{
		S_{\varepsilon / 2} \setminus S
	}
	\gamma
	\left(
		\frac{1}{f (x)}
	\right)
	dx
	<
	\infty
$$
by condition~\eqref{T2.1.2}. Therefore, we have
$$
	\int_{
		S_{\varepsilon / 2} \setminus S
	}
	|u|
	\,
	dx
	<
	\infty.
$$

By Lemma~\ref{L3.1}, there are non-negative functions $\psi_i \in C^\infty ({\mathbb R}^n)$ such that
$$
	\left.
		\psi_i
	\right|_{
		S_{r_{i+1}}
	}
	=
	0,
	\quad
	\left.
		\psi_i
	\right|_{
		{\mathbb R}^n \setminus S_{r_i}
	}
	=
	1,
	\quad
	\mbox{and}
	\quad
	\| \psi_i \|_{
		C^m ({\mathbb R}^n)
	}
	\le
	C (r_i - r_{i+1})^{-m},
	\quad
	i = 1,2,\ldots.
$$

Let $\varphi \in C_0^\infty (\Omega)$ be an arbitrary non-negative function. Taking
$$
	\varphi_i (x)
	=
	\psi_i (x)
	\varphi (x)
$$
as a test function in~\eqref{1.2}, we obtain
\begin{align}
	&
	\int_\Omega
	\sum_{|\alpha| = m}
	(-1)^m
	a_\alpha (x, u)
	\psi_i
	\partial^\alpha
	\varphi
	\,
	dx
	+
	\int_\Omega
	\sum_{
		|\alpha'| + |\alpha''| = m,
		\;
		|\alpha'| > 1
	}
	a_{\alpha' \alpha''} (x, u)
	\partial^{\alpha'}
	\psi_i
	\partial^{\alpha''}
	\varphi
	\,
	dx
	\nonumber
	\\
	&
	\qquad
	\ge
	\int_\Omega
	f (x)
	g (|u|)
	\psi_i
	\varphi
	\,
	dx,
	\label{PL3.3.2}
\end{align}
where $a_{\alpha' \alpha''}$ are Caratheodory functions satisfying condition~\eqref{PL3.2.3}.

It can easily be seen that
\begin{align*}
	&
	\left|
		\int_\Omega
		\sum_{
			|\alpha'| + |\alpha''| = m,
			\;
			|\alpha'| > 1
		}
		a_{\alpha' \alpha''} (x, u)
		\partial^{\alpha'}
		\psi_i
		\partial^{\alpha''}
		\varphi
		\,
		dx
	\right|
	\\
	&
	\qquad
	\le
	\frac{
		C
		\| \varphi \|_{
			C^m (\Omega)
		}
	}{
		(r_i - r_{i + 1})^m
	}
	\int_{
		S_{r_i} \setminus S_{r_{i + 1}}
	}
	|u|
	\,
	dx
	\to
	0
	\quad
	\mbox{as } i \to \infty.
\end{align*}
At the same time, by Lebesgue's bounded convergence theorem, we obviously have
$$
	\int_\Omega
	\sum_{|\alpha| = m}
	(-1)^m
	a_\alpha (x, u)
	\psi_i
	\partial^\alpha
	\varphi
	\,
	dx
	\to
	\int_{\Omega \setminus S}
	\sum_{|\alpha| = m}
	(-1)^m
	a_\alpha (x, u)
	\partial^\alpha
	\varphi
	\,
	dx
$$
and
$$
	\int_\Omega
	f (x)
	g (|u|)
	\psi_i
	\varphi
	\,
	dx
	\to
	\int_{\Omega \setminus S}
	f (x)
	g (|u|)
	\varphi
	\,
	dx
$$
as $i \to \infty$.
Thus, ~\eqref{PL3.3.2} implies~\eqref{1.2}, where $u$ is extended by zero to the set $S$.
\end{proof}

According to Lemma~\ref{L3.3}, if the singular set $S$ is not removable, then there exist numbers $\varepsilon_0 \in (0, \infty)$ and $r_0 \in (0, \varepsilon / 2) $ such that
\begin{equation}
	\frac{
		1
	}{
		(r - \rho)^m
	}
	\int_{
		S_r \setminus S_\rho
	}
	|u|
	\,
	dx
	>
	\varepsilon_0
	\label{3.1}
\end{equation}
for all $0 < \rho < r < r_0$.

\begin{Lemma}\label{L3.4}
Let~\eqref{T2.1.1} and~\eqref{3.1} be valid. Then
\begin{equation}
	\frac{
		1
	}{
		\mes S_{\rho_2} \setminus S_{\rho_1}
	}
	\int_{
		S_{\rho_2} \setminus S_{\rho_1}
	}
	|u|
	\,
	dx
	\le
	\frac{2}{\beta}
	G^{-1}
	\left(
		C
		\beta^{1/m}
		(\rho_1 - \rho_0)
		\essinf_{
			S_{\rho_2} \setminus S
		}
		f^{1 / m}
	\right)
	\label{L3.4.1}
\end{equation}
for all real numbers $0 < \rho_0 < \rho_1 < \rho_2 < r_0$,
where 
$$
	\beta
	=
	\frac{
		\mes S_{\rho_2} \setminus S_{\rho_1}
	}{
		\mes S_{\rho_2} \setminus S_{\rho_0}
	}
$$
$G^{-1}$ is the function inverse to~\eqref{T2.1.5}, and $C > 0$ is a constant depending only on
$A$, $n$, $m$, $\varepsilon$, $\varepsilon_0$, and the first summand in the left-hand side of~\eqref{L3.2.1}.
\end{Lemma}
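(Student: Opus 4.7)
The plan is a doubling iteration on the quantity $\phi(s):=\int_{S_{\rho_2}\setminus S_s}|u|\,dx$, driven by Lemma~\ref{L3.2}. Write $V(s):=\mes(S_{\rho_2}\setminus S_s)$, $\tilde J(s):=\phi(s)/V(s)$, $\tilde J_0:=\tilde J(\rho_1)$, and $f_*:=\essinf_{S_{\rho_2}\setminus S}f$. I would construct a strictly decreasing sequence $s_0=\rho_1>s_1>\cdots>s_N$ in $[\rho_0,\rho_1]$ by the rule $\phi(s_j)=2\phi(s_{j-1})$, stopping at the last index at which the next doubling would still be $\le\phi(\rho_0)$; by construction $s_N\ge\rho_0$ and $\phi(s_j)=2^j\phi(\rho_1)$.

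At each step I apply Lemma~\ref{L3.2} with $(\rho,r)=(s_j,s_{j-1})$ and estimate its right-hand side from below by Jensen's inequality applied to the convex function $g$ on $S_{\rho_2}\setminus S_{s_{j-1}}$:
\[
\int_{S_{\rho_2}\setminus S_{s_{j-1}}}f\,g(|u|)\,dx\ge f_*\,V(s_{j-1})\,g(\tilde J(s_{j-1})).
\]
Combined with the doubling identity $\phi(s_j)-\phi(s_{j-1})=\phi(s_{j-1})$ and after absorbing the additive constant $K:=\int_{S_\varepsilon\setminus S_{\varepsilon/2}}|u|\,dx$ coming from Lemma~\ref{L3.2} into the multiplicative constant (permissible in the regime where $g(\tilde J(s_{j-1}))$ dominates $K/V(s_{j-1})$; otherwise the conclusion of Lemma~\ref{L3.4} is trivially satisfied, which is exactly the dependence of $C$ on the first summand of~\eqref{L3.2.1}), one extracts the per-step estimate
\[
s_{j-1}-s_j\le C f_*^{-1/m}\bigl(\tilde J(s_{j-1})/g(\tilde J(s_{j-1}))\bigr)^{1/m}.
\]

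To close the loop I exploit the crude sandwich $V_M\le V(s)\le V_M/\beta$ for $s\in[\rho_0,\rho_1]$ with $V_M=V(\rho_1)$, which together with $\phi(s_j)=2^j\phi(\rho_1)$ gives $\beta\,2^j\tilde J_0\le\tilde J(s_j)\le 2^j\tilde J_0$; applying this asymmetrically to the numerator and denominator of $\tilde J/g(\tilde J)$ yields
\[
\frac{\tilde J(s_{j-1})}{g(\tilde J(s_{j-1}))}\le\frac{2^{j-1}\tilde J_0}{g(\beta 2^{j-1}\tilde J_0)}=\frac{1}{\beta}\cdot\frac{\beta 2^{j-1}\tilde J_0}{g(\beta 2^{j-1}\tilde J_0)}.
\]
A dyadic decomposition of $G(\beta\tilde J_0/2)=\int_{\beta\tilde J_0/2}^\infty(\zeta/g(\zeta))^{1/m}\zeta^{-1}\,d\zeta$, exploiting that $\zeta\mapsto g(\zeta)/\zeta$ is increasing by convexity with $g(0)=0$, then produces
\[
\sum_{j=1}^N\bigl(\tilde J(s_{j-1})/g(\tilde J(s_{j-1}))\bigr)^{1/m}\le C\,\beta^{-1/m}\,G(\beta\tilde J_0/2).
\]
Adding one final application of Lemma~\ref{L3.2} on the residual interval $[\rho_0,s_N]$ (the stopping rule ensures $\phi(\rho_0)-\phi(s_N)\le\phi(s_N)$, so the same bound applies) and telescoping $\sum(s_{j-1}-s_j)=\rho_1-\rho_0$ gives $\rho_1-\rho_0\le C f_*^{-1/m}\beta^{-1/m}G(\beta\tilde J_0/2)$; inverting via the strictly decreasing $G$ rearranges to $\beta\tilde J_0/2\le G^{-1}(C\beta^{1/m}f_*^{1/m}(\rho_1-\rho_0))$, which is exactly~\eqref{L3.4.1}.

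The main obstacle is tracking the factor $\beta^{1/m}$ to its correct position inside the argument of $G^{-1}$ while the reciprocal $\beta^{-1}$ stays outside; this asymmetry comes entirely from the gap between the upper and lower bounds on $V(s_j)$ in $[\rho_0,\rho_1]$, and must be propagated through the ratio $\tilde J(s_{j-1})/g(\tilde J(s_{j-1}))$ treating its numerator and denominator differently. A secondary technicality is the uniform absorption of the boundary term $K$ across the iteration, which is why the constant $C$ in the statement is allowed to depend on the $L^1$ norm of $u$ on $S_\varepsilon\setminus S_{\varepsilon/2}$.
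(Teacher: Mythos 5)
Your iteration is, in its skeleton, the paper's own proof: the paper runs the same doubling construction (on $J(\rho)=\bigl(\mes S_{\rho_2}\setminus S_{\rho_1}\bigr)^{-1}\int_{S_{\rho_2}\setminus S_\rho}|u|\,dx$, which doubles exactly when your $\phi$ does), applies Lemma~\ref{L3.2} plus Jensen's inequality at each step to get $(s_{j-1}-s_j)f_*^{1/m}\le C\bigl(\tilde J(s_{j-1})/g(\beta\,\cdot)\bigr)^{1/m}$, matches each term against a dyadic block of the integral defining $G$, telescopes, and ends with the substitution $t=\beta\zeta/2$ that puts $\beta^{1/m}$ inside $G^{-1}$ and $2/\beta$ outside; your residual interval $[\rho_0,s_N]$ is the paper's case $\rho_l=\rho_0$. (The trivial case $\phi(\rho_1)=0$ and the existence of the $s_j$ by continuity and monotonicity of $\phi$ deserve a sentence, as in the paper.)

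The one step that fails as written is the absorption of the boundary term $K=\int_{S_\varepsilon\setminus S_{\varepsilon/2}}|u|\,dx$. Your justification is a case distinction: absorption is ``permissible in the regime where $g(\tilde J(s_{j-1}))$ dominates $K/V(s_{j-1})$; otherwise the conclusion is trivially satisfied.'' The alternative is not trivial: if domination fails at some step you only obtain $\tilde J(s_{j-1})\le g^{-1}(K/V(s_{j-1}))$, a bound in terms of $K$ that bears no relation to the required right-hand side $\frac2\beta G^{-1}\bigl(C\beta^{1/m}(\rho_1-\rho_0)f_*^{1/m}\bigr)$ --- which, $G^{-1}$ being decreasing, can be arbitrarily small when $f_*$ or $\rho_1-\rho_0$ is large. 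The correct mechanism is hypothesis~\eqref{3.1}, which your proposal never invokes even though it is one of the two hypotheses of the lemma: for every pair $s_j<s_{j-1}<r_0$ the second summand on the left of~\eqref{L3.2.1} exceeds $\varepsilon_0$, hence $K\le(K/\varepsilon_0)$ times that summand, and the entire left-hand side of~\eqref{L3.2.1} is at most $(1+K/\varepsilon_0)$ times its second summand. This is precisely why the constant $C$ in the statement is allowed to depend on $\varepsilon_0$ and on the first summand of~\eqref{L3.2.1}. With that replacement your argument closes.
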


\begin{proof}
Consider the function
$$
	J (\rho)
	=
	\frac{
		1
	}{
		\mes S_{\rho_2} \setminus S_{\rho_1}
	}
	\int_{
		S_{\rho_2} \setminus S_\rho
	}
	|u|
	\,
	dx,
	\quad
	\rho_0 \le \rho \le \rho_1.
$$
If $J (\rho_1) = 0$, then~\eqref{L3.4.1} is obvious; 
therefore, it can be assumed that $J (\rho_1) > 0$.

We construct a finite sequence of real numbers $\{ \rho_i \}_{i=1}^l$ as follows. 
The first term $\rho_1$ of this sequence is defined in the conditions of the lemma.
Assume further that $\rho_i$ is already known for some integer $i \ge 1$. 
If $\rho_i \le (\rho_0 + \rho_1) / 2$, then we put $l = i$ and stop; otherwise we take
$$
	\rho_{i+1}
	=
	\inf 
	\{
		r \in (\rho_0 , \rho_i)
		:
		J (r)
		\le 
		2 J (r_i)
	\}.
$$
Since $J$ is a non-increasing positive function on the closed interval $[\rho_0, \rho_1]$,
this procedure must complete in a finite number of steps.

Let us denote by $C$ various positive constants which can depend 
only on $A$, $n$, $m$, $\varepsilon$, $\varepsilon_0$, and the first summand in the left-hand side of~\eqref{L3.2.1}. 
From inequalities~\eqref{L3.2.1} and~\eqref{3.1}, where $\rho = \rho_{i+1}$ and $r = \rho_i$,
it follows that
$$
	\frac{
		1
	}{
		(\rho_i - \rho_{i+1})^m
	}
	\int_{
		S_{\rho_i} \setminus S_{\rho_{i+1}}
	}
	|u|
	\,
	dx
	\ge
	C
	\int_{
		S_{\varepsilon / 2} \setminus S_{\rho_i}
	}
	f (x)
	g (|u|)
	\,
	dx
$$
for all $1 \le i \le l - 1$.
This obviously yields
$$
	\int_{
		S_{\rho_i} \setminus S_{\rho_{i+1}}
	}
	|u|
	\,
	dx
	\ge
	C (\rho_i - \rho_{i+1})^m
	\essinf_{
		S_{\rho_2} \setminus S
	}
	f
	\int_{
		S_{\rho_2} \setminus S_{\rho_i}
	}
	g (|u|)
	\,
	dx
$$
or, in other words,
\begin{equation}
	J (\rho_{i+1}) - J (\rho_i)
	\ge
	C (\rho_i - \rho_{i+1})^m
	\essinf_{
		S_{\rho_2} \setminus S
	}
	f
	\frac{
		1
	}{
		\mes S_{\rho_2} \setminus S_{\rho_1}
	}
	\int_{
		S_{\rho_2} \setminus S_{\rho_i}
	}
	g (|u|)
	\,
	dx
	\label{PL3.4.1}
\end{equation}
for all $1 \le i \le l - 1$.
Since $g$ is a convex function, we have
$$
	\frac{
		1
	}{
		\mes S_{\rho_2} \setminus S_{\rho_i}
	}
	\int_{
		S_{\rho_2} \setminus S_{\rho_i}
	}
	g (|u|)
	\,
	dx
	\ge
	g 
	\left(
		\frac{
			1
		}{
			\mes S_{\rho_2} \setminus S_{\rho_i}
		}
		\int_{
			S_{\rho_2} \setminus S_{\rho_i}
		}
		|u|
		\,
		dx
	\right)
$$
for all $1 \le i \le l - 1$.
Consequently,~\eqref{PL3.4.1} implies the inequalities
$$
	J (\rho_{i+1}) - J (\rho_i)
	\ge
	C (\rho_i - \rho_{i+1})^m
	g (\beta J (\rho_i))
	\essinf_{
		S_{\rho_2} \setminus S
	}
	f,
	\quad
	i = 1, 2, \ldots, l - 1,
$$
whence it follows that
\begin{equation}
	\frac{
		J^{1 / m} (\rho_i)
	}{
		g^{1 / m} (\beta J (\rho_i))
	}
	\ge
	C
	(\rho_i - \rho_{i+1})
	\essinf_{
		S_{\rho_2} \setminus S
	}
	f^{1 / m},
	\quad
	i = 1, 2, \ldots, l - 1.
	\label{PL3.4.2}
\end{equation}

At first, let $\rho_l > \rho_0$. In this case, we can obviously assert that $J (\rho_{i+1}) = 2 J (\rho_i)$ for all $1 \le i \le  l - 1$; therefore,
$$
	\int_{
		J (\rho_i)
	}^{
		J (\rho_{i+1})
	}
	g^{- 1 / m} (\beta \zeta / 2)
	\zeta^{1 / m - 1}
	\,
	d\zeta
	\ge
	\frac{
		2^{1 / m - 1}
		J^{1 / m} (\rho_i)
	}{
		g^{1 / m} (\beta J (\rho_i))
	},
	\quad
	i = 1, 2, \ldots, l - 1.
$$
Combining this with~\eqref{PL3.4.2}, we obtain
$$
	\int_{
		J (\rho_i)
	}^{
		J (\rho_{i+1})
	}
	g^{- 1 / m} (\beta \zeta / 2)
	\zeta^{1 / m - 1}
	\,
	d\zeta
	\ge
	C
	(\rho_i - \rho_{i+1})
	\essinf_{
		S_{\rho_2} \setminus S
	}
	f^{1 / m},
	\quad
	i = 1, 2, \ldots, l - 1.
$$
Summing the last expression over all $1 \le i \le  l - 1$, one can conclude that
\begin{equation}
	\int_{
		J (\rho_1)
	}^\infty
	g^{- 1 / m} (\beta \zeta / 2)
	\zeta^{1 / m - 1}
	\,
	d\zeta
	\ge
	C
	(\rho_1 - \rho_0)
	\essinf_{
		S_{\rho_2} \setminus S
	}
	f^{1 / m}.
	\label{PL3.4.3}
\end{equation}

Now, let $\rho_l = \rho_0$. In this case, we have $\rho_{l-1} - \rho_l \ge (\rho_1 - \rho_0) / 2$; therefore,~\eqref{PL3.4.2} implies the estimate
$$
	\frac{
		J^{1 / m} (\rho_{l-1})
	}{
		g^{1 / m} (\beta J (\rho_{l-1}))
	}
	\ge
	C
	(\rho_1 - \rho_0)
	\essinf_{
		S_{\rho_2} \setminus S
	}
	f^{1 / m}.
$$
Combining this with the inequality
$$
	\int_{
		J (\rho_{l-1})
	}^{
		2 J (\rho_{l-1})
	}
	g^{- 1 / m} (\beta \zeta / 2)
	\zeta^{1 / m - 1}
	\,
	d\zeta
	\ge
	\frac{
		2^{1 / m - 1}
		J^{1 / m} (\rho_{l-1})
	}{
		g^{1 / m} (\beta J (\rho_{l-1}))
	},
$$
we again arrive at~\eqref{PL3.4.3}. 
In its turn, by the change of variable $t = \beta \zeta / 2$, formula~\eqref{PL3.4.3} can be transformed into 
$$
	\left(
		\frac{2}{\beta}
	\right)^{1/m}
	\int_{
		\beta J (\rho_1) / 2
	}^\infty
	g^{- 1 / m} (t)
	t^{1 / m - 1}
	\,
	dt
	\ge
	C
	(\rho_1 - \rho_0)
	\essinf_{
		S_{\rho_2} \setminus S
	}
	f^{1 / m},
$$
whence~\eqref{L3.4.1} follows at once.
\end{proof}

\begin{proof}[Proof of Theorem~$\ref{T2.3}$]
Assume the converse.
Let the singular set $S$ is not removable. In this case, in accordance with Lemma~\ref{L3.3} condition~\eqref{3.1} is valid.
As in the proof of Lemma~\ref{L3.4}, we denote by $C$ various positive constants which can depend 
only on $A$, $n$, $m$, $\varepsilon$, $\varepsilon_0$, and the first summand in the left-hand side of~\eqref{L3.2.1}. 

Consider a subsequence of the sequence $r_i = 2^{-i} r_0$, $i = 1,2,\ldots$, such that
\begin{equation}
	\mes S_{r_{i_j}} \setminus S_{r_{i_j + 1}}
	\ge
	\mes S_{r_{i_j + 1}} \setminus S_{r_{i_j + 2}}
	\label{PT2.3.1}
\end{equation}
for all $j = 1,2,\ldots$.
Such a subsequence obviously exists; otherwise we have
$$
	\mes S_{r_{i + 1}} \setminus S_{r_{i + 2}}
	>
	\mes S_{r_i} \setminus S_{r_{i + 1}}
$$
for all $i \ge l$, where $l$ is a positive integer.
Hence,
$$
	\mes S_{r_{i + 1}} \setminus S_{r_{i + 2}}
	>
	\mes S_{r_l} \setminus S_{r_{l + 1}}
	>
	0
$$
for all $i \ge l$. This contradicts the fact that
$$
	\sum_{i=l}^\infty
	\mes S_{r_i} \setminus S_{r_{i + 1}}
	=
	\mes S_{r_l} \setminus S
	<
	\infty.
$$

Taking into account~\eqref{T2.3.1}, we obtain
$$
	n - k - \delta_j
	\le
	\frac{
		\log 
		\mes 
		S_{
			r_{i_j}
		} 
		\setminus 
		S
	}{
		\log r_{i_j}
	},
	\quad
	j = 1,2,\ldots,
$$
where $\delta_j \to 0$ as $j \to \infty$,
whence it follows that
\begin{equation}
	\mes S_{r_{i_j}} \setminus S
	\le
	r_{i_j}^{n - k - \delta_j},
	\quad
	j = 1,2,\ldots.
	\label{PT2.3.2}
\end{equation}
Lemma~\ref{L3.4} with 
$\rho_0 = r_{i_j + 2}$, $\rho_1 = r_{i_j + 1}$, and $\rho_2 = r_{i_j}$,
yields
\begin{equation}
	\frac{
		1
	}{
		\mes S_{r_{i_j}} \setminus S_{r_{i_j + 1}}
	}
	\int_{
		S_{r_{i_j}} \setminus S_{r_{i_j + 1}}
	}
	|u|
	\,
	dx
	\le
	\frac{2}{\beta_j}
	G^{-1}
	\left(
		C
		\beta_j^{1/m}
		r_{i_j}
		\essinf_{
			S_{r_{i_j}} \setminus S
		}
		f^{1 / m}
	\right),
	\label{PT2.3.3}
\end{equation}
where
$$
	\beta_j
	=
	\frac{
		\mes S_{r_{i_j}} \setminus S_{r_{i_j + 1}}
	}{
		\mes S_{r_{i_j}} \setminus S_{r_{i_j + 2}}
	},
	\quad
	j = 1,2,\ldots.
$$
In view of~\eqref{PT2.3.2} and the inequality
\begin{equation}
	\frac{1}{2}
	\le
	\beta_j
	\le
	1
	\label{PT2.3.4}
\end{equation}
which follows from~\eqref{PT2.3.1}, formula~\eqref{PT2.3.3} leads to the estimate
$$
	\frac{
		1
	}{
		\left( 
			r_{i_j} - r_{i_j + 1}
		\right)^m
	}
	\int_{
		S_{r_{i_j}} \setminus S_{r_{i_j + 1}}
	}
	|u|
	\,
	dx
	\le
	2^{m + 2}
	r_{i_j}^{n - k - m - \delta_j}
	G^{-1}
	\left(
		C
		r_{i_j}
		\essinf_{
			S_{r_{i_j}} \setminus S
		}
		f^{1 / m}
	\right)
$$
for all $j = 1,2,\ldots$.
According to~\eqref{T2.1.3}, the right-hand side of the last expression tends to zero as $i \to \infty$. This contradicts~\eqref{3.1}.
\end{proof}

\begin{proof}[Proof of Theorem~$\ref{T2.1}$]
We have
\begin{equation}
	S_r \setminus S
	\subset
	(\partial S)_r
	\label{PT2.1.1}
\end{equation}
for all real numbers $r > 0$, where $(\partial S)_r$ is the set of the points $x \in {\mathbb R}^n$ 
the distance from which to $\partial S$ is less than $r$.
Consequently,
$$
	\operatorname{\overline{dim}}_F \partial S
	=
	n
	-
	\liminf_{r \to +0}
	\frac{
		\log \mes (\partial S)_r
	}{
		\log r
	}
	\ge
	n
	-
	\liminf_{r \to +0}
	\frac{
		\log \mes S_r \setminus S
	}{
		\log r
	}.
$$
Thus, to complete the proof, it remains to use Theorem~\ref{T2.3}.
\end{proof}

\begin{proof}[Proof of Theorem~$\ref{T2.4}$]
Assume by contradiction that the singular set $S$ is not removable. 
Then in accordance with Lemma~\ref{L3.3} condition~\eqref{3.1} is valid.
We agree to denote by $C$ various positive constants which can depend 
only on $A$, $n$, $m$, $k$, $\varepsilon$, $\varepsilon_0$, 
the limit in~\eqref{T2.4.1},
and the first summand in the left-hand side of~\eqref{L3.2.1}. 

Let us put $r_i = 2^{-i} r_0$, $i = 1,2,\ldots$. 
By~\eqref{T2.4.1}, we have
\begin{equation}
	\mes S_{r_i} \setminus S
	\le
	C
	r_i^{n - k}
	\label{PT2.4.1}
\end{equation}
for all sufficiently large $i$.
As in the proof of Theorem~\ref{T2.3}, consider a subsequence of the sequence $\{ r_i \}_{i=1}^\infty$ satisfying condition~\eqref{PT2.3.1}.
Taking $\rho_0 = r_{i_j + 2}$, $\rho_1 = r_{i_j + 1}$, and $\rho_2 = r_{i_j}$ in Lemma~\ref{L3.4},
we obviously obtain~\eqref{PT2.3.3},
whence in accordance with~\eqref{PT2.3.4} and~\eqref{PT2.4.1} it follows that
$$
	\frac{
		1
	}{
		\left( 
			r_{i_j} - r_{i_j + 1}
		\right)^m
	}
	\int_{
		S_{r_{i_j}} \setminus S_{r_{i_j + 1}}
	}
	|u|
	\,
	dx
	\le
	2^{m + 2}
	r_{i_j}^{n - k - m}
	G^{-1}
	\left(
		C
		r_{i_j}
		\essinf_{
			S_{r_{i_j}} \setminus S
		}
		f^{1 / m}
	\right)
$$
for all $j = 1,2,\ldots$.
In view of~\eqref{T2.2.1}, this yields
$$
	\limsup_{j \to \infty}
	\frac{
		1
	}{
		\left( 
			r_{i_j} - r_{i_j + 1}
		\right)^m
	}
	\int_{
		S_{r_{i_j}} \setminus S_{r_{i_j + 1}}
	}
	|u|
	\,
	dx
	<
	\infty.	
$$
Hence, putting $r = r_{i_j}$ and $\rho = r_{i_j + 1}$ in Lemma~\ref{L3.2} and passing to the limit as $j \to \infty$, we obtain~\eqref{PL3.3.1}, whence it follows that
\begin{equation}
	\lim_{i \to \infty}
	\int_{
		S_{r_i} \setminus S_{r_{i + 1}}
	}
	f (x)
	g (|u|)
	\,
	dx
	=
	0.
	\label{PT2.4.2}
\end{equation}
Since $g$ is a convex function, we have
$$
	\frac{
		1
	}{
		\mes S_{r_i} \setminus S_{r_{i + 1}}
	}
	\int_{
		S_{r_i} \setminus S_{r_{i + 1}}
	}
	g (|u|)
	\,
	dx
	\ge
	g 
	\left(
		\frac{
			1
		}{
			\mes S_{r_i} \setminus S_{r_{i + 1}}
		}
		\int_{
			S_{r_i} \setminus S_{r_{i + 1}}
		}
		|u|
		\,
		dx
	\right)
$$
for all $i = 1,2,\ldots$.
At the same time, by~\eqref{3.1}, Lemma~\ref{L3.2} implies the estimate
$$
	\int_{
		S_{r_i} \setminus S_{r_{i + 1}}
	}
	|u|
	\,
	dx
	\ge
	C r_i^m
	\int_{
		S_{\varepsilon / 2} \setminus S_{r_i}
	}
	f (x)
	g (|u|)
	\,
	dx,
	\quad
	i = 1,2,\ldots.
$$
Hence,
$$
	\frac{
		1
	}{
		\mes S_{r_i} \setminus S_{r_{i + 1}}
	}
	\int_{
		S_{r_i} \setminus S_{r_{i + 1}}
	}
	g (|u|)
	\,
	dx
	\ge
	g 
	\left(
		\frac{
			C r_i^m
		}{
			\mes S_{r_i} \setminus S_{r_{i + 1}}
		}
		\int_{
			S_{\varepsilon / 2} \setminus S_{r_i}
		}
		f (x)
		g (|u|)
		\,
		dx
	\right)
$$
for all $i = 1,2,\ldots$.
Combining this with the inequalities
$$
	\int_{
		S_{r_i} \setminus S_{r_{i + 1}}
	}
	f (x)
	g (|u|)
	\,
	dx
	\ge
	\essinf_{
		S_{r_i} \setminus S
	}
	f
	\int_{
		S_{r_i} \setminus S_{r_{i + 1}}
	}
	g (|u|)
	\,
	dx,
	\quad
	i = 1,2,\ldots,
$$
we obtain
\begin{align*}
	\int_{
		S_{r_i} \setminus S_{r_{i + 1}}
	}
	f (x)
	g (|u|)
	\,
	dx
	\ge
	{}
	&
	\essinf_{
		S_{r_i} \setminus S
	}
	f
	\,
	\mes S_{r_i} \setminus S_{r_{i + 1}}
	\\
	&
	{}
	\times
	g 
	\left(
		\frac{
			C r_i^m
		}{
			\mes S_{r_i} \setminus S_{r_{i + 1}}
		}
		\int_{
			S_{\varepsilon / 2} \setminus S_{r_i}
		}
		f (x)
		g (|u|)
		\,
		dx
	\right)
\end{align*}
for all $i = 1,2,\ldots$.
This can obviously be written in the form
\begin{equation}
	\int_{
		S_{r_i} \setminus S_{r_{i + 1}}
	}
	f (x)
	g (|u|)
	\,
	dx
	\ge
	C
	\essinf_{
		S_{r_i} \setminus S
	}
	f
	\,
	r_i^m
	\,
	\frac{g (t_i)}{t_i}
	\int_{
		S_{\varepsilon / 2} \setminus S_{r_i}
	}
	f (x)
	g (|u|)
	\,
	dx,
	\label{PT2.4.4}
\end{equation}
where
$$
	t_i
	=
	\frac{
		C r_i^m
	}{
		\mes S_{r_i} \setminus S_{r_{i + 1}}
	}
	\int_{
		S_{\varepsilon / 2} \setminus S_{r_i}
	}
	f (x)
	g (|u|)
	\,
	dx,
	\quad
	i = 1,2,\ldots.
$$

By~\eqref{T2.4.1}, we have
$$
	t_i
	\ge
	C r_i^{m - n + k}
	\int_{
		S_{\varepsilon / 2} \setminus S_{r_i}
	}
	f (x)
	g (|u|)
	\,
	dx
$$
for all sufficiently large $i$.
In view of~\eqref{T2.2.2}, $f$ is a positive function in a neighborhood of the set $S$; therefore,
$$
	\lim_{i \to \infty}
	\int_{
		S_{\varepsilon / 2} \setminus S_{r_i}
	}
	f (x)
	g (|u|)
	\,
	dx
	>
	0.
$$
Really, if the last inequality is not valid, then the function $u$ is identically equal to zero in  $S_{\varepsilon / 2} \setminus S$. This, in turn, implies that the singular set $S$ is removable.
Thus, in accordance with~\eqref{T2.2.2} for all sufficiently large $i$ the right-hand side of~\eqref{PT2.4.4} is bounded from below by a positive constant independent of $i$.
We arrive at a contradiction with~\eqref{PT2.4.2}.
\end{proof}

\begin{proof}[Proof of Theorem~$\ref{T2.2}$]
According to~\eqref{PT2.1.1}, if $\partial S$ has finite $k$-di\-men\-si\-on, then inequality~\eqref{T2.4.1} is valid. Thus, Theorem~\ref{T2.2} follows immediately from Theorem~\ref{T2.4}.
\end{proof}

\end{document}